\documentclass[preprint,12pt]{elsarticle}
\usepackage{amsmath}
\usepackage{amssymb}
\usepackage{MnSymbol}
\usepackage{chngcntr}
\usepackage{caption}
\usepackage{subcaption}
\usepackage{graphicx}
\newtheorem{theorem}{Theorem}[section]
\newtheorem{remark}{Remark}[section]
\newtheorem{lemma}{Lemma}
\newenvironment{proof}[1][Proof]{\begin{trivlist}
\item[\hskip \labelsep {\bfseries #1}]}{\end{trivlist}}
\usepackage{epstopdf}
\usepackage{fancyhdr}
\journal{Chaos, Solitons and Fractals}
\counterwithout{equation}{section}
\pagestyle{fancy}
\fancyhf{}
\fancyhead[L]{\scriptsize Copyright $\copyright$ 2015 Elsevier B.V. Accepted for Publication in Chaos, Solitons and Fractals. DOI: 10.1016/j.chaos.2015.02.004}
\begin{document}
\begin{frontmatter}
\title{A Dynamic Model for Infectious Diseases: The Role of Vaccination and Treatment}
\author[coauth]{P. Raja Sekhara Rao}
 \address[coauth]{Department of Mathematics, Government Polytechnic, Addanki, A.P., 523 201, India.}
\author[focal]{M.~Naresh Kumar\corref{cor1}}
 \ead{nareshkumar\_m@nrsc.gov.in}
 \cortext[cor1]{Principal Corresponding Author}
 \cortext[cor1]{Tel.: +91 40 2388 4388; Fax.: +91 40 2388 4437}
 \address[focal]{Software Group, National Remote Sensing Center (ISRO), Hyderabad, Telangana, 500 037, India.}
\fntext[fn1]{This article is dedicated to their research supervisor Prof. V. Sree Hari Rao for his continued inspiration.}
\markboth{\scriptsize Copyright $\copyright$ 2015 Elsevier B.V.. Accepted for Publication in Chaos, Solitons and Fractals.  DOI: 10.1016/j.chaos.2015.02.004} {\scriptsize \thepage}
\begin{abstract}
Understanding dynamics of an infectious disease helps in designing appropriate strategies for containing its spread in a population. Recent mathematical models are aimed at studying dynamics of some specific types of infectious diseases. In this paper we propose a new model for infectious diseases spread having susceptible, infected, and recovered populations and study its dynamics in presence of incubation delays and relapse of the disease. The influence of treatment and vaccination efforts on the spread of infection in presence of time delays are studied. Sufficient conditions for local stability of the equilibria and change of stability are derived in various cases. The problem of global stability is studied for an important special case of the model. Simulations carried out in this study brought out the importance of treatment rate in controlling the disease spread. It is observed that incubation delays have influence on the system even under enhanced vaccination. The present study has clearly brought out the fact that treatment rate in presence of time delays would contain the disease as compared to popular belief that eradication can only be done through vaccination.
\end{abstract}
\begin{keyword}
infectious diseases, dynamic models, time delays, stability, treatment rate, vaccination, relapse
\end{keyword}
\end{frontmatter}

\section{Introduction}
Infectious diseases spread through media such as air, water, direct contact or carriers such as insects, flies, and mosquitoes \cite{2,32}. The dynamics may well be understood by modeling causes or carriers of the disease. It is not possible all the time to control the media or carriers involved in spread of the infection and sometimes it may even be beyond determination. Many a time, we may not even recognize the presence of an infection until it becomes predominant. Mathematical modeling is a tool that has been popularly employed in prevention and control of infectious diseases such as severe acute respiratory syndrome (SARS) \cite{1}, human immunodeficiency virus infection/acquired immune deficiency syndrome (HIV/AIDS) \cite{4}, H5N1 (avian flu) \cite{33} and H1N1 (swine flu) \cite{31}. Also, they have been useful in studying some of the drug resistant strains of malaria \cite{12}, tuberculosis \cite{5}, methicillin-resistant staphylococcus aureus (MRSA) \cite{16} and marine bacteriophage infection \cite{3}.


Simple deterministic models are based on dividing the population into compartments such as susceptible, exposed, infected and recovered. The susceptible-Infective (SI) model is useful in understanding diseases such as feline infectious peritonitis (FIP) \cite{23} as the host remain infected till they die. The susceptible-infective-susceptible (SIS) model is employed in studying infections such as Gonorrhea \cite{27} as there is a possibility of host becoming susceptible to infection once again. The susceptible-infective-recovered (SIR) model is employed to understand diseases such as Syphilis \cite{29}, wherein, immunity lasts for a limited period before waning such that the individual is once again susceptible. Dynamics of such systems are generally studied from two perspectives, (i) stability of disease-free equilibrium, and (ii) stability of endemic equilibrium. In the former case conditions under which pathogens suffer extinction leaving individuals as susceptible are studied, whereas in the later, conditions in which  infective and susceptible populations coexist are established.

Interactions among populations in compartments such as the susceptible $(x)$ and infected $(y)$ is the key to understand the rate at which the infection can spread in the population. Models with bilinear interaction among populations $(xy)$ tend to disease free equilibrium or endemic equilibrium  \cite{7}. Researchers have considered various interactions of non-linear type such as $\beta {x^p} {y^q}$ \cite{19, 20, 28, 34, 43}, $\frac{xy}{1+ax}$ \cite{46}, or $\frac{xy}{1+y}$ \cite{13,15,30,37,38,39,45} for further understanding dynamics of infectious diseases. It is observed that solutions of periodic nature do exist even without periodic forcing term in the model. The existence of periodic solutions are usually attributed to the presence of time delays in the system even in case of bilinear incidence. Delays do exist in disease transmission, known as the latency period of infection, from inception to an identifiable state. For some interesting studies on infectious disease models, readers are referred to \cite{8,14,21a,14a,17,21,22,24,25,40,42,44}. On the other hand, fluctuations in populations may also be attributed to changes in environment, especially, in open systems. These perturbations are influenced by the environment and may be treated as noise in the system. Dynamic models of competing species under the influence of a noise are extensively studied in \cite{6,9,26,41}. Predictive models based on both clinical, incidence data pertaining to infectious diseases and epidemics in general are discussed in \cite{35,35a,35b}.


In the earlier studies on infectious disease models, the identification of basic reproduction number gave an idea on the stability of the system either in terms of disease free environment or disease prevalence. The reproduction number represents the number of secondary cases that are caused by a primary case when introduced into a wholly susceptible population. Also, it is observed that if reproduction number is less than unity, the disease free equilibrium is stable, otherwise, the disease prevails \cite{15}. When the nature of interaction is not known or the basic reproduction number is more than unity, one needs to find ways to restrict the disease to a minimum, manageable level, if the disease free environment cannot be provided. When it comes to the control of a disease, vaccination or preventive strategies could be a good choice during the early stages of the disease provided the population is not large enough. Emergence of vaccine resistant strains is one of the drawbacks of frequent or long term vaccination. When massive vaccination is not possible, the second stage of defensive mechanism could be medical treatment. This adds a new sub category of population called recovered class representing the population recovered by treatment.

Basing on the above observations, we consider the following three compartmental system in the present paper,
\begin{eqnarray}
\label{eqn1}
x' &=& a -b f(x,y)- d x - cV(x)+\alpha z \nonumber\\
y' &=& b_1 f\big(x(t-\tau), y(t)\big)- r P(y) -d_1 y \nonumber\\
z' &=& r P\big(y(t-\delta)\big) -\alpha z,
\end{eqnarray}
wherein, $x(t)$, $y(t)$ and $z(t)$ denote susceptible, infected and recovered (by treatment) populations at any time $t$ respectively, $'=\frac{d}{dt}$ denotes the time derivative of a function, $a \geq 0$ is the growth rate of susceptible population, $f$ denotes the non-linear incidence or infection function showing how susceptibles $x$ are converted into infected $y,$ $b>0$ denotes the rate of contact or interaction of infected with susceptible and $d$ is the rate of removal of such susceptible individuals from the system who are naturally immune to the infection and in no way get infected, $V(x)$ is the vaccination function (depends on susceptible population), $c>0$ is the rate of successful vaccination and $0< b_1 \leq b$ is the rate of conversion of susceptible into infected. 

In case, $b_1 =b,$ we may expect the infection to be at its helm and spreading $100\%$. If $b_1 <b,$ we may infer that the disease is not that effective or the missing $x'$s are no more susceptible and may be removed from the system. The time delay $\tau >0$ implies that $x$ when gets in contact with $y$ takes time `$\tau$' to become infected and is called the temporary immunity parameter. The parameter $d_1>0$ is the removal/death rate of the infected population-either not at all treated or inadequately treated or beyond the treatment. It may also be viewed as the rate at which infected population is kept in a quarantine, away from susceptible population avoiding any sort of contact between infected and exposed, $P(y)$ is the recovery (by treatment) function of the infected, $r>0$ denotes the rate of treatment and also the recovery rate. However, the infected by a treatment procedure may recover after a time gap $\delta.$ The parameter $\alpha >0$ is the rate at which a recovered (non-vaccinated) individual may be re-exposed to infection and become susceptible again. This case arises when the individual appears to be recovered by treatment but is prone to infection again. The basic interactions among various populations are shown in Figure \ref{Fig1}.

\begin{figure}[th]
  \includegraphics[height=70mm,width=\textwidth]{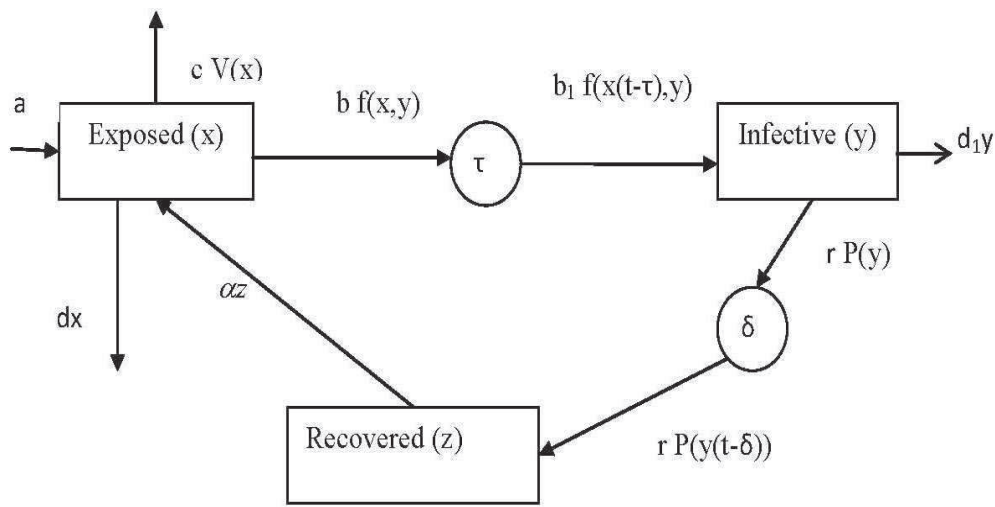}\\
  \caption{The flow dynamics of a general infectious disease model (\ref{eqn1})}\label{Fig1}
\end{figure}

We assume the following conditions on the infection function:
\begin{enumerate}[(i).]
\item $f(x,y) \geq 0$, $\forall x,\; y$
\item $f(0,y)=0,$ $\forall y,$ 
\item $f(x,0)=0 \forall x,$
\end{enumerate}
the first condition (non-negativity) is a minimum requirement for any function representing biological systems and for dynamics to make sense, the second condition arises out of the situation when there are no susceptible, i.e., $y$ has no more influence on them and the third condition is required to create a disease-free environment. Necessity for such condition is described in Section \ref{sec2}. 

In addition, a non-negativity condition is to be satisfied by $P$ as it represents the recovery (by treatment) function. Thus, we assume $P(y) \geq 0,$ $\forall y$ with strict inequality holding for $y>0,$ as we are interested in studying the influence of treatment on spread of disease. In the absence of infected population, we have $P(0)=0$ as no treatment effort is required.

Depending on the reproduction number of an epidemic and its availability, we need to initiate vaccination. The following are the probable cases, (i) a uniform constant supply $V(x)= k,$ (ii) a fixed constant or proportional to susceptible population $V(x)= x$ (linear), (iii) sub-linear $V(x)= \frac{x}{k+x}$ or $V(x)=\tanh{x}$ (have a saturation limit), and (iv) $V(x)$ may be a periodic function when the infection is fluctuating, season-dependent or cyclic. However, in case of a new infectious disease, system may not be ready with vaccines to prevent its spread, the recent case being the spread of Ebola \cite{16a}. Thus, a time gap is required to start the vaccination procedure until a vaccine is invented and supplied. So, we may have $V(x(t))=0,$ $\forall t$, such that, $0\leq t \leq t_1$ and $V=V(x(t)) \geq 0,$ $\forall t > t_1.$ A delay is also warranted in this section which we shall discuss in the final section.

A basic motivation for this study stems from observations in \cite{37} that vaccination alone is not sufficient to contain disease effectively and treatment plays an important role in the disease management. We shall analyze the model (\ref{eqn1}) keeping in view the following concerns which are important in handling infectious disease:
\begin{enumerate}
  \item The influence of treatment or recovery rate \emph{r},
  \item Influence of vaccination effort $V,$
  \item Combined effect of vaccination and treatment,
   \item How to destabilize equilibrium $(x^*,y^*,z^*)$ when $y^* >0$ ? or can we make $y \to 0.$
 \end{enumerate}


The present paper aims to provide answers to the above questions by organizing its content as follows: in Section \ref{sec2}, the possible equilibria are determined for the model equations (\ref{eqn1}). In Section \ref{sec3}, local stability properties of the system are discussed. In Subsection \ref{subsec31}, the behaviour of delay-free system is discussed. Influence of time delay $\tau>0$ on the stability of the delay-free system is studied in Subsection \ref{subsec32}, while the influence of delay in recovery ($\delta >0$) alone is studied in Subsection \ref{subsec33}. Bounds for these delays, conditions for change in the direction of stability are also discussed. In Subsection \ref{subsec34}, the general case of delays $\tau>0$ and $\delta>0$ is discussed. In Section \ref{sec4}, global stability results are derived for a special case in which both the delays are present. Examples and simulation results are presented in Section \ref{sec5}, while discussions and scope for future study are deferred to Section \ref{sec6} and Section \ref{sec7} respectively.

\section{Equilibria and Characteristic Equation}
\label{sec2}
Let us consider a case of the system (\ref{eqn1}) with assumptions made above as
\begin{eqnarray}
\label{eqn21}
x' &=& a -b f(x,y)- d x - c V(x)+\alpha z \nonumber\\
y' &=& b_1 f(x(t-\tau), y) - r P(y) -d_1 y \nonumber\\
z' &=& r P(y(t-\delta)) -\alpha z.
\end{eqnarray}

By continuity of solutions of (\ref{eqn21}) and by definitions of functions  $f,\; V$ and $P,$ it is easy to see that $x,y,z$ are all non-negative in their domains of definition.

\subsection{\textbf{Equilibria}}
\label{subsec21}
Equilibria are the constant solutions of the system. A popular way to understand the system is to study the behaviour of its equilibria. For (\ref{eqn21}), we look for two possibilities, (i) existence of an equilibrium of the type $(\bar{x},0,0)$ called disease-free equilibrium whose stability implies that the infection has little influence, and (ii) a stable positive equilibrium $(\tilde{x},\tilde{y},\tilde{z})$ called endemic equilibrium, which denotes the disease prevalence. Unless specifically stated, $(x^*,y^*,z^*)$ denotes either of these equilibria throughout the subsequent study.

The equilibria $(x^*, y^*, z^*)$ of (\ref{eqn21}) should satisfy
\begin{eqnarray}
b f(x^{*},y^{*})+c V(x^{*})+d x^{*}- \alpha z^{*} &=& a  \nonumber\\
        b_1 f(x^{*},y^{*})-d_1 y^{*}-r P(y^{*}) &=& 0 \nonumber\\
        r P(y^{*}) &=& \alpha z^{*}.
\end{eqnarray}

We hypothesize that if $0<V(\frac{a}{d})< \infty$ holds, then there shall exist a point $\bar{x}$ such that it satisfies the condition $cV(\bar{x}) = a - d\bar{x}$. To further illustrate our hypothesis we considered a real-time dataset pertaining to SARS \cite{23a} and fitted a logistic curve of the form $V(x)=\frac{C}{1+A \exp^{-Bx}}$ to the number of reported SARS cases ($x$) in a given time interval. We estimated parameters of the logistic curve from data as $C=206$, $B=0.1542$ and $A=102$. The inflection point of the logistic curve give by $(\frac{\ln{A}}{B},\frac{C}{2})$ is the point $(30,103)$ shown in Figure \ref{Fig2} at which the curve switches from an increasing rate of growth to a decreasing rate, clearly suggesting the existence of a disease-free equilibrium point $(\bar{x},0,0)$ whose stability implies a disease-free environment. The logistic curve and the line passing through the disease-free equilibrium point $\bar{x}$ is shown in Figure \ref{Fig2}. This provides a mechanism to design the vaccination ($V$) methods for containing the disease. 

\begin{figure}[th]
	\includegraphics[height=60mm,width=\textwidth]{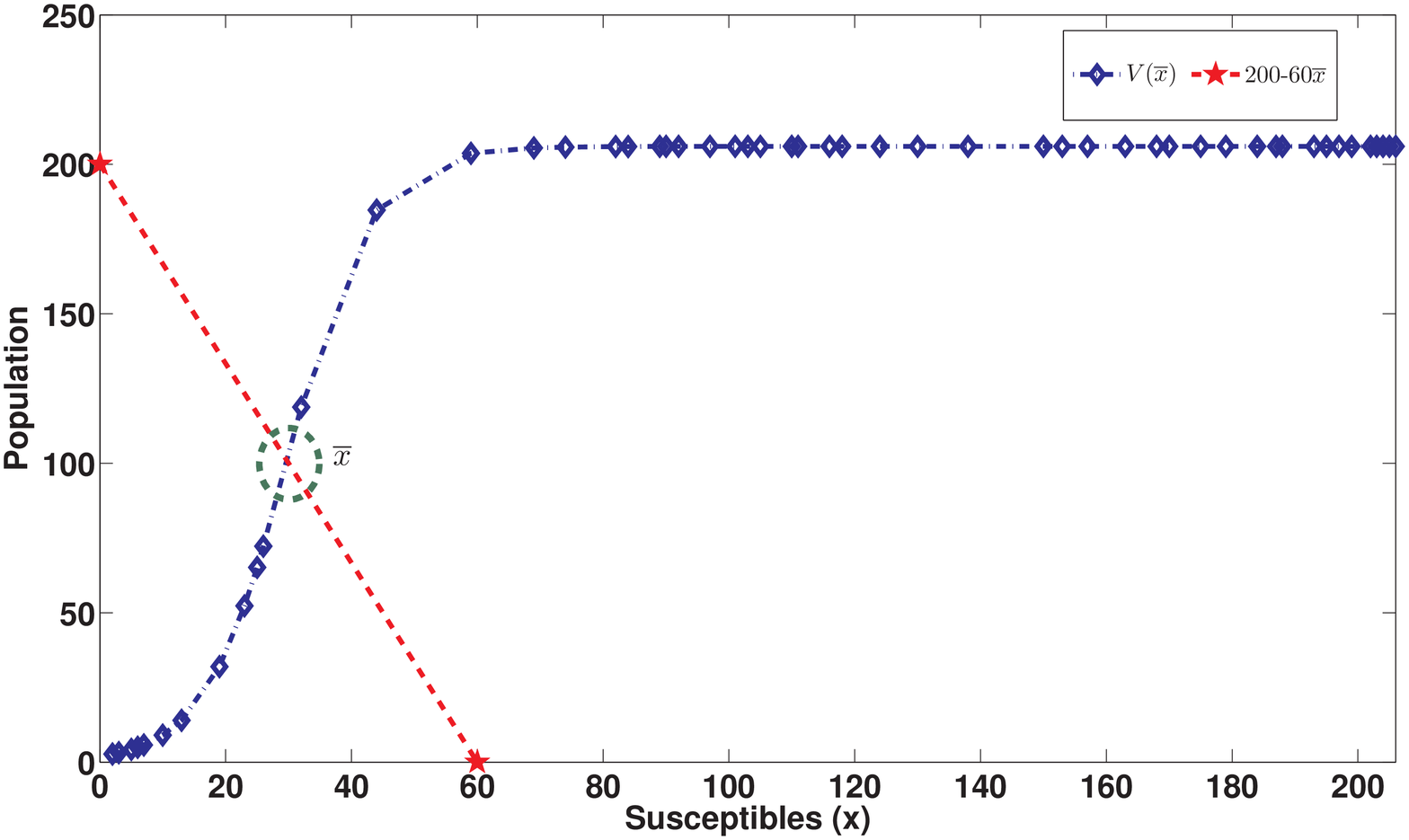}
  \caption{Disease-free equilibrium point}
	\label{Fig2}
\end{figure}

 \subsection{\textbf{Characteristic Equation}}

Linearizing the system (\ref{eqn21}) around the equilibrium $(x^{*},y^{*},z^{*})$, we get

 \[\left[ \begin{array}{l}
 x^{'}  \\
 y^{'}  \\
 z^{'}  \\
 \end{array} \right] = \left[ {\begin{array}{*{10}c}
   { - b\frac{{\partial f}}{{\partial x}} - cV^{'}\left(x \right) - d} & { - b\frac{{\partial f}}{{\partial y}}} & {\alpha }  \\
   0 & {b_1 \frac{{\partial f}}{{\partial y}} - d_{1} - rP^{'}\left(y \right)} & 0  \\
   0 & 0 & { - \alpha }  \\
\end{array}} \right]\left[ \begin{array}{l}
 x \\
 y \\
 z \\
 \end{array} \right]\]\quad \[+ \left[ {\begin{array}{*{10}c}
   0 & 0 & 0  \\
   {b_1 \frac{{\partial f\left( {x_\tau  ,y} \right)}}{{\partial x_\tau  }}} & 0 & 0  \\
   0 & {r\frac{{\partial P}}{{\partial y_\delta  }}} & 0  \\
\end{array}} \right]\left[ \begin{array}{l}
 x_\tau   \\
 y_\delta   \\
 0 \\
 \end{array} \right]\]

or \\
\begin{equation}
\label{eqn23}
 \left[ \begin{array}{l}
 x^{'}  \\
 y^{'}  \\
 z^{'}  \\
 \end{array} \right] = \left[{\begin{array}{*{10}c}
  A & B & { \alpha }  \\
   0 & C & 0  \\
   0 & 0 & { - \alpha }  \\
\end{array}} \right]\left[ {\begin{array}{l}
 x \\
 y \\
 z \\
 \end{array}} \right] + \left[ {\begin{array}{*{10}c}
 0 & 0 & 0  \\
   D & 0 & 0  \\
   0 & E & 0  \\
\end{array}} \right]\left[ \begin{array}{l}
 x_\tau   \\
 y_\delta   \\
 0 \\
 \end{array} \right]
\end{equation}
\\
where $A={- b\frac{{\partial f}}{{\partial x}} - cV^{'}\left(x \right) - d}$, $B={ - b\frac{{\partial f}}{{\partial y}}}$, $C={b_1
\frac{{\partial f}}{{\partial y}} - d_{1} - rP^{'}\left(y \right)}$, \\ $D=b_{1}\frac{{\partial f\left( {x_\tau  ,y} \right)}}{{\partial x_\tau }}$ $,$ $E={r\frac{{\partial P}}{{\partial y_\delta  }}}$ are evaluated at $(x^{*},y^{*},z^{*}),$ where $x_{\tau} = x(t-\tau),\; y_{\delta} = y(t-\delta).$

The characteristic equation of (\ref{eqn23}) is given by 
\[
F(\lambda)
 = \left |{\begin{array}{*{10}c}
  {(A-\lambda)} & B & { \alpha }  \\
   {De^{-\lambda\tau}} & {C-\lambda} & 0  \\
   0 & Ee^{-\lambda\delta} & { - \alpha-\lambda }  \\
\end{array}} \right |=0.
\]

That is,
\begin{eqnarray}
\label{eqn24}
 F(\lambda)\equiv
\lambda^{3}-(A+C-\alpha)\lambda^{2}+[AC-\alpha(A+C)]\lambda+\\ \nonumber
\alpha CA-B(\alpha+\lambda)De^{-\lambda\tau}-D\alpha Ee^{-\lambda(\tau+\delta)}=0.
\end{eqnarray}
Letting $ l=\alpha-(A+C),\; m=AC-\alpha(A+C),\; n=\alpha CA, \; l_{1}=-BD,\; m_{1}=-B\alpha D,\; n_{1}=-D\alpha E$ we denote (\ref{eqn24}) as
\begin{equation}
\label{eqn25}
F(\lambda)\equiv \lambda^{3}+l\lambda^{2}+m\lambda+n+l_{1}\lambda e^{-\lambda\tau}+m_{1}e^{-\lambda\tau}+n_{1}e^{-\lambda(\tau+\delta)} =0.
\end{equation}
Letting $\lambda=\mu+i\nu$ in (\ref{eqn25}) and separating the real and imaginary parts of $F(\mu+i\nu)$, we have real and imaginary parts (denoted hereafter by $\Re(.)$ and $\Im(.)$) of $F(\lambda)$ as
\begin{eqnarray}
\label{eqn26}
\Re(F(\lambda)) &=& \mu^{3}-3\mu\nu^{2}+l(\mu^{2}-\nu^{2})+m\mu+n +l_{1}[\mu\cos\nu\tau+ \nonumber \\ 
                         &&\nu\sin\nu\tau]e^{-\mu\tau}+m_{1}e^{-\mu\tau}\cos\nu\tau+ n_{1}\cos\nu(\tau+\delta)e^{-\mu(\tau+\delta)} \nonumber \\
\Im(F(\lambda)) &=& -\nu^{3}+3\mu^{2}\nu+2l\mu\nu+m\nu+l_{1}[\nu\cos\nu\tau-  \nonumber \\ 
&&\mu\sin\nu\tau]e^{-\mu\tau}-m_{1}e^{-\mu\tau}\sin\nu\tau-n_{1}\sin\nu(\tau+\delta)e^{-\mu(\tau+\delta)}.
\end{eqnarray}

\section{Local Stability}
\label{sec3}
\subsection{\textbf{Delay-Free System}}
\label{subsec31}
We shall first consider the case when there are no delays in contracting the disease ($\tau=0$) and in recovery
($\delta=0$). The equation (\ref{eqn25}) reduces to $$\lambda^{3}+ l \lambda^{2}+ (l_1 +m)\lambda+ n+m_1 +n_1=0.$$ The elementary algebra says that if (i) $l>0,$ $l_1+m>0,$ and (ii) $n+m_1+n_1>0$ hold, all the roots of the above equation have negative real parts. Thus, we have the Theorem \ref{th31} which needs no proof.

\begin{theorem}
\label{th31}
For $\tau=0$, $\delta=0$, the system (\ref{eqn21}) is locally asymptotically stable provided that the parameters satisfy $l>0,$ $l_1+m>0$ and $n+m_1+n_1>0.$ $\square$
\end{theorem}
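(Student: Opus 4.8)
The statement concerns a cubic polynomial with real coefficients, so the natural instrument is the Routh--Hurwitz criterion rather than any delay-specific machinery: once $\tau=\delta=0$ the exponential terms in (\ref{eqn25}) collapse and the characteristic equation becomes the ordinary polynomial
\[
\lambda^{3}+l\lambda^{2}+(l_1+m)\lambda+(n+m_1+n_1)=0 .
\]
Abbreviating this as $\lambda^{3}+p_1\lambda^{2}+p_2\lambda+p_3$ with $p_1=l$, $p_2=l_1+m$ and $p_3=n+m_1+n_1$, the plan is to show that every root lies in the open left half-plane, whence local asymptotic stability of the equilibrium follows from the standard linearization theorem, the Jacobian spectrum of the delay-free system being exactly the root set of this cubic.

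First I would invoke (or rederive in a couple of lines from the Routh array) the fact that, for a real cubic, the conditions $p_1>0$, $p_3>0$ and $p_1p_2>p_3$ are together necessary and sufficient for all three roots to have strictly negative real parts. Two of these are supplied immediately by the hypotheses, since $p_1=l>0$ and $p_3=n+m_1+n_1>0$ are assumed outright.

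The hard part will be the coupling inequality $p_1p_2>p_3$, that is $l(l_1+m)>n+m_1+n_1$. The hypotheses only furnish the sign condition $p_2=l_1+m>0$, which ensures each coefficient is positive but does \emph{not} by itself guarantee the Hurwitz product inequality---positivity of all coefficients of a cubic is necessary but not sufficient for stability (for instance $\lambda^{3}+\lambda^{2}+\lambda+10$ has all coefficients positive yet is unstable, since $p_1p_2=1<10=p_3$). I would therefore close this gap in one of two ways: either strengthen the stated hypothesis to include $l(l_1+m)>n+m_1+n_1$ explicitly, or try to deduce that inequality from the structural definitions $l=\alpha-(A+C)$, $m=AC-\alpha(A+C)$, $n=\alpha CA$, $m_1=-B\alpha D$ and $n_1=-D\alpha E$, exploiting the common factor $\alpha$ carried by $n$, $m_1$ and $n_1$. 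Establishing (or honestly assuming) this third Routh--Hurwitz condition is where the real content of the argument resides; the remaining verification is then purely mechanical.
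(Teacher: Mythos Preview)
Your approach via Routh--Hurwitz is exactly the mechanism the paper has in mind: the paper offers no proof at all, merely asserting that ``elementary algebra'' gives the result and declaring that the theorem ``needs no proof.'' So in terms of method you are aligned with the paper, only more careful.

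More importantly, you have put your finger on a genuine defect---not in your proposal, but in the theorem as the paper states it. The three hypotheses $l>0$, $l_1+m>0$, $n+m_1+n_1>0$ are precisely the positivity of the three coefficients of the cubic, and as your counterexample $\lambda^{3}+\lambda^{2}+\lambda+10$ shows, this is \emph{not} sufficient for all roots to lie in the open left half-plane; the Routh--Hurwitz coupling condition $l(l_1+m)>n+m_1+n_1$ is genuinely missing. The paper does not supply it, does not derive it, and does not acknowledge its absence. Your second proposed escape route---trying to recover the inequality from the structural forms $l=\alpha-(A+C)$, $m=AC-\alpha(A+C)$, $n=\alpha CA$, $l_1=-BD$, $m_1=-\alpha BD$, $n_1=-\alpha DE$---does not succeed in general: expanding $l(l_1+m)-(n+m_1+n_1)$ gives an expression in $A,C,D,B,E,\alpha$ whose sign is not forced by the sign conditions already assumed, so no purely algebraic miracle rescues the claim. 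The honest fix is the one you name first: add $l(l_1+m)>n+m_1+n_1$ to the hypotheses. Your analysis is correct; the paper's statement, taken at face value, is not.
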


\begin{remark}
\label{th32}
In particular, if $D=0,$ $C\leq 0$ hold, and when $V'(x) \geq 0$, $\frac{\partial f}{\partial x} \geq 0,$ we have $A\leq 0$then the characteristic equation (\ref{eqn25}) reduces to $$\lambda^{3}+l\lambda^{2}+m\lambda+n=0,$$ which is delay free. Clearly, all its coefficients are non-negative and there is no possibility of a root with positive real part. Thus, the equilibrium $(x^*,y^*,z^*)$ is stable when $D=0,$ $C\leq 0$ and $A\leq 0.$ Thus, we need to explore possibilities of finding a disease-free equilibrium at which these conditions hold - implying the possibility of disease-free environment. The choice of parameters, selection of vaccination function, identification of infection function and recovery may render this possible.  $\square$
\end{remark}

We may further infer the following:
\begin{enumerate}
\item In case the system has both a disease free equilibrium $E_0$ and an endemic equilibrium $E_1$ and if conditions of Theorem \ref{th31} hold at $E_0$ then we may say that the disease has no influence,
\item If $E_0$ is either unstable (at least one of the roots of the above equation has a positive real part) or does not exist at all, then we check for the stability of the endemic equilibrium $E_1,$
\item If $E_1$ is stable, we have to explore the possibilities of destabilizing it by increasing (i) vaccination efforts, (ii) treatment rate or by (iii) delay in process of infection,
\item In case $E_1$ is unstable but the solutions are exhibiting the dominance of the disease  or the system has no $E_1$, we need to study the behaviour of solutions in a way to handle the infected population (such as finding an $\epsilon >0,$ $T>0$ such that $y(t)\leq \epsilon $ for $t>T$).
\end{enumerate}

We shall provide some answers to these towards the end of this article. We shall now study the influence of time delays on the system. First, we start with a delay in incubation but no delay in the recovery of treated.

\subsection{\textbf{Delay only in Incubation (parameter $\tau>0,\; \delta=0$)}}
\label{subsec32}
To understand the influence of the time delay in converting the exposed population to infected only, we allow $\delta =0$ in equation (\ref{eqn25}). We get
\begin{equation}
\label{eqn31}
\lambda^{3}+l\lambda^{2}+m\lambda+(l_{1}\lambda+m_{1}+n_{1})e^{-\lambda\tau}+n = 0.
\end{equation}
Equations of the type (\ref{eqn31}) are well studied in literature (e.g., \cite{8,10,11}). Letting $a_0 = n^2 -(m_1+n_1)^2,$ $ a_1 =m^2 -2ln-{l_1}^2$ and $a_2 =l^2 -2m,$ a straight forward application of Theorem 4.1 of \cite{8} yields the Theorem \ref{th33}.

\begin{theorem}
\label{th33}
Assume that either (i) $a_0 >0,$ $a_1 \geq 0$ or (ii) $a_0 >0,$ $a_1 <0,$ and $2a_2^3 -9a_1a_2 + 27a_0 > 2(a_2^2-3a_1)^{3/2}$ holds. Then if the equilibrium $(x^*,y^*,z^*)$ of (\ref{eqn21}) is stable(unstable) at $\tau =0,$ it remains stable(unstable) for any $\tau >0.$ If $a_0 \leq 0$ then if $(x^*,y^*,z^*)$ is unstable for $\tau =\tau_0 \geq 0$ then it will be unstable for all $\tau >\tau_0.$ If $a_0 <0$ and if $(x^*,y^*,z^*)$ is stable at $\tau=0$ then there exists a $\tilde{\tau} >0$ for which $(x^*,y^*,z^*)$ is unstable for $\tau >\tilde{\tau}.$ $\square$
\end{theorem}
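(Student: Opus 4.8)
The plan is to use the standard principle that, as the delay $\tau$ varies continuously, the roots of the characteristic function (\ref{eqn31}) move continuously, so the count of roots with positive real part can change only when a root crosses the imaginary axis. Since the situation at $\tau=0$ is already fixed by Theorem \ref{th31}, it suffices to decide whether (\ref{eqn31}) can admit a purely imaginary root $\lambda=i\omega$ with $\omega>0$ for some $\tau>0$, and, when it can, in which direction that root crosses.

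First I would substitute $\lambda=i\omega$ into (\ref{eqn31}) and separate real and imaginary parts, obtaining
\begin{align*}
(m_1+n_1)\cos\omega\tau + l_1\omega\sin\omega\tau &= l\omega^2 - n, \\
l_1\omega\cos\omega\tau - (m_1+n_1)\sin\omega\tau &= \omega^3 - m\omega.
\end{align*}
Squaring each equation and adding eliminates the $\tau$-dependent trigonometric terms, since the two left-hand sides collapse to $(m_1+n_1)^2 + l_1^2\omega^2$. Writing $u=\omega^2$, the outcome is exactly the cubic
\[
h(u) \equiv u^3 + a_2 u^2 + a_1 u + a_0 = 0,
\]
with $a_0,a_1,a_2$ precisely as defined before the statement. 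Hence a purely imaginary root exists for some $\tau>0$ if and only if $h$ has a positive real root $u=\omega^2$, and the whole theorem reduces to a sign analysis of $h$ on $(0,\infty)$ together with a crossing-direction computation.

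The core of the argument is then the behaviour of $h$ for $u>0$, using $h(0)=a_0$ and $h(u)\to+\infty$. In case (i), $a_0>0$ with $a_1\ge 0$ prevents $h$ from vanishing on $(0,\infty)$ (via $h'(u)=3u^2+2a_2u+a_1$ staying positive, so $h$ is increasing from $h(0)>0$). In case (ii), where $a_1<0$, the relevant local minimum sits at $u_+=\tfrac{1}{3}\bigl(-a_2+\sqrt{a_2^2-3a_1}\bigr)>0$, and a direct evaluation shows that the displayed inequality $2a_2^3-9a_1a_2+27a_0>2(a_2^2-3a_1)^{3/2}$ is \emph{exactly} $h(u_+)>0$; so again $h>0$ on $(0,\infty)$. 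In both cases there is no imaginary crossing, and the status inherited from $\tau=0$ persists for every $\tau>0$. Conversely, when $a_0\le 0$ we have $h(0)\le 0<h(+\infty)$, forcing a root $u_0>0$ and making a crossing possible.

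To turn the existence of a crossing into the final two assertions I would compute the transversality condition, i.e. the sign of $\frac{\mathrm{d}(\Re\lambda)}{\mathrm{d}\tau}$ at $\lambda=i\omega_0$ with $\omega_0^2=u_0$. Differentiating (\ref{eqn31}) implicitly and reorganising gives $\mathrm{sign}\bigl(\tfrac{\mathrm{d}(\Re\lambda)}{\mathrm{d}\tau}\bigr)=\mathrm{sign}\,h'(u_0)$; when $a_0<0$ the outermost positive root satisfies $h'(u_0)>0$, so the crossing is destabilising, which yields the threshold $\tilde\tau$ past which a $\tau=0$-stable equilibrium loses stability and shows that instability, once present, cannot be undone by enlarging $\tau$. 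I expect the main obstacle to be twofold: verifying cleanly that the cubic-discriminant inequality in (ii) is literally $h(u_+)>0$, and pinning down the transversality sign unambiguously, since the implicit differentiation must be rearranged so that its sign is read off from $h'(u_0)$ rather than from the original, messier expression. Both are the content of Theorem 4.1 of \cite{8}, which the statement invokes.
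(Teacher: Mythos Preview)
The paper does not actually prove this theorem: it simply records that the result ``yields'' from Theorem~4.1 of \cite{8} and gives no further argument. Your proposal reconstructs precisely the standard proof underlying that cited result---substitute $\lambda=i\omega$ into (\ref{eqn31}), square and add to obtain the cubic $h(u)=u^3+a_2u^2+a_1u+a_0$ in $u=\omega^2$, analyse its positive roots, and read the crossing direction from $\mathrm{sign}\,h'(u_0)$---so your approach and the paper's are the same in spirit; you merely supply the details the paper defers to \cite{8}.

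One caution on case~(i): you assert that $h'(u)=3u^2+2a_2u+a_1$ stays positive on $(0,\infty)$ whenever $a_1\ge 0$, but this fails if $a_2$ is sufficiently negative (for instance $a_2=-10$, $a_1=1$, $a_0=1$ gives $h(1)=-7<0$, so $h$ has a positive root despite $a_0>0$ and $a_1>0$). The usual statement of this lemma in the literature includes $a_2\ge 0$ in case~(i), and without it the monotonicity step does not go through; you should either add that hypothesis or replace the monotonicity claim by the correct two-root/discriminant argument rather than assert $h'>0$ outright.
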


Notice that Theorem \ref{th33} (or Theorem 4.1 of \cite{8}) ensures the preservation of stability (instability) but does not specify the values of lengths of delay for which this holds. The following result helps us in estimating delay in such cases. We need the following result (\cite{18}) as utilized in \cite{36} for our study.

\begin{lemma} \textbf{(Theorem 1, \cite{18})}
\label{lm1}
A system with characteristic equation \\ $F(\lambda, \tau) \equiv  P(\lambda) + Q(\lambda) e^{-\lambda \tau } =0$ is asymptotically stable if and only if the following conditions are satisfied, \begin{enumerate}[(i).] \item \label{lm11} $F_1(\lambda) = P(\lambda) + Q(\lambda) \neq 0,\; \Re(\lambda) \geq 0$, \item \label{lm12} $F_2(\lambda) = P(\lambda)- Q(\lambda) \neq 0,  \Re(\lambda) = 0, \lambda \neq 0$, \item \label{lm13} $F_3(\lambda) = P(\lambda) + \frac{1-\lambda T}{1+\lambda T} Q(\lambda) \neq 0, \; \Re(\lambda) = 0, \; \lambda \neq 0, \; \forall \,\, 0<T<\infty$ \end{enumerate}

where, $T$ denotes the pseudo delay and is related to $\tau$ as $\tau=\frac{2}{\nu}[\tan^{-1}(\nu T)+k\pi $], $\forall k \in \mathbb{Z}$. $\square$
\end{lemma}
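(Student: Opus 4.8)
The plan is to read asymptotic stability as the requirement that every zero of the entire function $F(\lambda,\tau)=P(\lambda)+Q(\lambda)e^{-\lambda\tau}$ satisfy $\Re(\lambda)<0$, and to locate those zeros by a continuity-plus-argument-principle sweep. First I would record the two structural hypotheses that make the statement meaningful: $P,Q$ are polynomials with $\deg P>\deg Q$, so that $F$ is of retarded type and its zeros accumulate only as $\Re(\lambda)\to-\infty$, and the zeros of $F$ vary continuously with $\tau$. Under these hypotheses the number of zeros in the closed right half-plane, computed by the argument principle on a large Bromwich contour, is an integer that can change only when a zero crosses the imaginary axis. The problem therefore splits into fixing a reference configuration of known zero count and excluding every imaginary-axis crossing.

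For the reference I would use the delay-free equation: $F_1(\lambda)=P(\lambda)+Q(\lambda)$ is exactly $F(\lambda,0)$, so condition (i), $F_1(\lambda)\neq0$ for $\Re(\lambda)\geq0$, says the delay-free system has no right half-plane zero and contributes a count of $0$. To exclude crossings I would linearize the transcendental factor on the imaginary axis: for $\lambda=i\nu$ one has $|e^{-i\nu\tau}|=1$, and writing $e^{-i\nu\tau}=\frac{1-i\nu T}{1+i\nu T}$ gives $\arg\frac{1-i\nu T}{1+i\nu T}=-2\tan^{-1}(\nu T)$, which upon matching arguments forces precisely the stated relation $\tau=\frac{2}{\nu}\big[\tan^{-1}(\nu T)+k\pi\big]$. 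Clearing denominators turns $F(i\nu,\tau)=0$ into $(1+i\nu T)P(i\nu)+(1-i\nu T)Q(i\nu)=0$, i.e.\ $F_3(i\nu)=0$, so condition (iii) with $0<T<\infty$ forbids any crossing through a point $e^{-i\nu\tau}\neq\pm1$. The M\"obius map omits exactly the two endpoints: $T\to0^+$ yields $e^{-i\nu\tau}=1$, reproducing $F_1(i\nu)$, which is already nonzero by condition (i) on the imaginary axis (this also disposes of $\nu=0$), while $T\to\infty$ yields $e^{-i\nu\tau}=-1$, reproducing $F_2(i\nu)=P(i\nu)-Q(i\nu)$, forbidden by condition (ii). Hence (i)--(iii) together give $F(i\nu,\tau)\neq0$ for all real $\nu$; no zero can touch the imaginary axis, the stable reference cannot lose its zero count by continuity, and asymptotic stability follows. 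For the converse I would run the implication backwards: if stability holds, then no zero lies in the closed right half-plane, so each of $F_1,F_2,F_3$ must be nonvanishing on the indicated sets, since a zero of any of them would manifest a zero of $F$ on or across the imaginary axis.

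I expect the principal obstacle to be the rigorous bookkeeping of the pseudo-delay correspondence. The relation between $T$ and $\tau$ is multivalued through the integer $k$, and one must verify that as $T$ ranges over $(0,\infty)$ the induced values of $e^{-i\nu\tau}$ genuinely exhaust the unit circle minus the two endpoints, uniformly in the sign of $\nu$, so that no admissible crossing is overlooked. A secondary technical point is justifying the argument-principle count for the transcendental $F$: one must bound $F$ away from zero on the far arc of the contour using $\deg P>\deg Q$, and confirm that only finitely many zeros lie in any vertical strip, so that the continuity-and-crossing argument is valid and the equivalence is clean.
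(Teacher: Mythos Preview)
Your sketch is sound and follows the standard route for results of this type: anchor at $\tau=0$ via condition~(i), then use continuity of the zeros in $\tau$ together with a bilinear (M\"obius) parametrization of $e^{-i\nu\tau}$ on the unit circle to rule out imaginary-axis crossings via conditions~(ii) and~(iii). The technical caveats you flag---that the map $T\mapsto(1-i\nu T)/(1+i\nu T)$ for fixed sign of $\nu$ covers only a semicircle, so one must pair $\pm\nu$ (or invoke the integer $k$ in the $\tau$--$T$ relation) to exhaust all crossing phases, and that the retarded-type hypothesis $\deg P>\deg Q$ is needed to make the argument-principle count finite and well defined---are exactly the points that require care, and you have identified them correctly.

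However, you should be aware that the paper does \emph{not} prove this lemma at all: it is quoted verbatim as Theorem~1 of Hertz--Jury--Zeheb~\cite{18} and closed with a $\square$ immediately after the statement. The only commentary the paper offers is a one-line gloss that condition~(i) ensures stability at $\tau=0$, condition~(ii) excludes pure imaginary zeros in the limiting case, and condition~(iii) excludes them for finite pseudo-delay. So there is no ``paper's own proof'' to compare against; your proposal goes well beyond what the paper provides and is, in substance, a reconstruction of the argument in the cited reference.
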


The conditions (\ref{lm11}) in Lemma \ref{lm1} ensures stability for $\tau =0,$ (\ref{lm12}) confirms that no pure imaginary zeros in the limiting case and condition (\ref{lm13}) establishes the absence of imaginary zeros. It may be observed that $F_1,\; F_2,\; F_3$ are obtained from $F(\lambda, \tau)$ by letting $e^{-\lambda \tau} = 1,\; -1,\; (1-\lambda T)/(1+\lambda T)$ respectively. The Lemma \ref{lm1} is a necessary and sufficient condition and violation of any of the conditions (\ref{lm11}), (\ref{lm12}) or (\ref{lm13}) reflects corresponding change in stability of characteristic equation (\cite{20}).

Employing Lemma \ref{lm1}, we shall present another set of conditions for preservation or change of stability of the system. To apply above lemma, we shall denote (\ref{eqn31}) as
\begin{equation}
\label{eqn32}
 P(\lambda)+Q(\lambda)e^{-\lambda\tau}=0,
\end{equation} 
in which $P(\lambda)=\lambda^{3}+l\lambda^{2}+m\lambda+n$ and $Q(\lambda)=l_{1}\lambda+m_{1}+n_{1}.$ 
\begin{enumerate}

\item Let $e^{-\lambda\tau}=1$ in (\ref{eqn32}). The characteristic polynomial becomes $$F_{1}(\lambda)\equiv P(\lambda)+Q(\lambda)=\lambda^{3}+l\lambda^{2}+(l_{1}+m)\lambda+(m_{1}+n_{1}+n).$$ clearly $F_{1}(\lambda)\neq 0$ for $\Re(\lambda)\geq 0$, by our Theorem \ref{th31}. This ensures stability for $\tau=0.$

\item Let $e^{-\lambda\tau}=-1$ in (\ref{eqn32}) to get $$F_{2}(\lambda)\equiv P(\lambda)-Q(\lambda)=\lambda^{3}+l\lambda^{2}+(m-l_{1})\lambda+(n-m_{1}-n_{1}).$$ Suppose $\lambda=i\nu$ then $$F_{2}(\lambda)=(i\nu)^{3}+l(i\nu)^{2}+(m-l_{1})(i\nu)+(n-m_{1}-n_{1}) $$ $$= i[-\nu^{2}+m-l_{1}]\nu+((n-m_{1}-n_{1})-l\nu^{2}).$$ Clearly $F_{2}(\lambda)= 0 $ if and only if a $\nu$ exists such that ${\nu}^2 = m-l_1 = \frac{n-m_1 -n_1}{l}$ holds. Clearly $F_{2}(\lambda)\neq 0 $ for $\Re(\lambda)=0$, $\lambda\neq 0$ provided any of the following conditions

\begin{enumerate}
\item $m-l_1\leq 0$ or
\item  $l(n-m_{1}-n_{1})\leq 0$ or $l(m-l_1) \ne n-m_1 -n_1$ 
\end{enumerate}
holds, ensuring the presence of no imaginary zeros in the limiting case.

\item Let $e^{-\lambda\tau}=\frac{1-\lambda T}{1+\lambda T}$ from some $T>0.$ Then (\ref{eqn32}) becomes, $$F_{3}(\lambda)\equiv P(\lambda)+\frac{1-\lambda T}{1+\lambda T}Q(\lambda).$$ If we can show that $F_{3}(\lambda)\neq 0$ for  $\Re(\lambda)=0$, $\lambda\neq 0$,$ \forall \; 0<T<\infty$ then (\ref{eqn32}) has no pure imaginary zeros, ensuring no change of stability. 
\end{enumerate}

Now, 
\begin{eqnarray*}
F_{3}(\lambda)&=& \lambda^{3}+l\lambda^{2}+m\lambda+n+(l_{1}\lambda+m_{1}+n_{1})\frac{1-\lambda
T}{1+\lambda T} \\
&=& \frac{\lambda^{4}
T+(lT+1)\lambda^{3}+(l+mT-l_{1}T)\lambda^{2}}{1+\lambda T}+ \\
&&\frac{(m+nT-m_{1}T-n_{1}T+l_{1})\lambda+(n+m_{1}+n_{1})}{1+\lambda T}.
\end{eqnarray*}

Further, $F_{3}(\lambda)=0$ iff $$\lambda^{4} T+(lT+1)\lambda^{3}+(l+mT-l_{1}T)\lambda^{2}+(m+nT-m_{1}T-n_{1}T+l_{1})\lambda+(n+m_{1}+n_{1})=0.$$

Assume that $n+m_{1}+n_{1}\neq 0,$ otherwise $\lambda=0.$ Letting $\lambda=i\nu$ in $F_{3}(\lambda)=0$ and separating the real and imaginary parts, we get real part as \\
$\nu^{4}+((l_1-m)T-l)\nu^{2}+(n+m_{1}+n_{1})=0,$ \\
and imaginary part as \\
$-(lT+1)\nu^{3}+(m+nT-m_{1}T-n_{1}T+l_{1})\nu=0.$ 

From the imaginary part, we have $\nu\neq 0 \Rightarrow \nu^{2}=\frac{(l_{1}+m)+(n-m_{1}-n_{1})T}{1+lT}.$ The following cases arise:
\begin{enumerate}
\item no real $\nu$ satisfies this if $\frac{(l_{1}+m)+(n-m_{1}-n_{1})T}{1+lT} <0$ holds for any $T>0.$
This after a rearrangement gives $l(n-m_1 -n_1) T^2 + [l(l_1+m) + (n-m_1 -n_1)]T + (l_1 +m) <0.$ Thus, if $l(n-m_1 -n_1)>0,\; [l(l_1+m) + (n-m_1 -n_1)]>0$ and $(l_1 +m) >0,$ no real $\nu$ exists, thus, establishing the no change of stability for $\tau\geq 0$ given earlier. 

\item using the value of $\nu^{2}$ from imaginary part in real part, we get $$[\frac{(l_{1}+m)+(n-m_{1}-n_{1})T}{1+lT}]^{2}$$ $$-[\frac{(l_{1}+m)+(n-m_{1}-n_{1})T}{1+lT}](l+mT-l_{1}T)+(n+m_{1}+n_{1})=0.$$ This after a rearrangement becomes
\begin{eqnarray*}
&-& l(n-m_{1}-n_{1})(m-l_{1})T^{3} \\
&+& [(n-m_{1}-n_{1})^{2}\\
&&-[(m^{2}-l_{1}^{2})l+l^{2}(n-m_{1}-n_{1})-(n-m_{1}-n_{1})(m-l_{1})]\\
&+& l^{2}(n+m_{1}+n_{1})]T^{2}+ [2(l_{1}+m)(n-m_{1}-n_{1})\\
&&-[(l_{1}+m)l^{2}+l(n-m_{1}-n_{1})+(m^{2}-l_{1}^{2})]+2l(n+m_{1}+n_{1})]T \\
&+& [(l_{1}+m)^{2}-l(l_{1}+m)+(n+m_{1}+n_{1})]=0.
\end{eqnarray*}
This may be written as
\begin{equation}
\label{eqn33}
\overline{A}T^{3}+\overline{B}T^{2}+\overline{C}T+\overline{D}=0.
\end{equation}
\end{enumerate}

We summarize the above discussion in Theorem \ref{th34} as
\begin{theorem}
\label{th34}
Assume that the equilibrium solution of (\ref{eqn21}) is stable for $\tau=0$ and $\delta=0.$ Then the system preserves its stability for any length of delay $\tau >0$ and $\delta=0$ provided either of the following cases satisfy
\begin{enumerate}
\item $l(n-m_1-n_1) >0,\; l(l_1+m) + (n-m_1-n_1) >0,\; l_1+m >0,$
\item $\overline{A}\geq 0, \; \overline{B}\geq 0, \; \overline{C}\geq 0, \; \overline{D}\geq 0$ hold,
\item a change of stability occurs at $\tau =\tau_+ >0$ if any of the coefficients $\overline{A}, \;\overline{B}, \; \overline{C}, \;\overline{D}$ is negative, 
\end{enumerate}
here, $\tau_+ = \frac{2}{\nu_+} tan^{-1} ({\nu_{+} T_{+}}), \, \nu_+ = \frac{l_1+m + (n-m_1-n_1)T_{+}}{1+lT_{+}} $ and $T_{+}>0$  \\  is a solution of (\ref{eqn33}). $\square$
\end{theorem}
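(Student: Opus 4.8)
The plan is to verify the three conditions of Lemma \ref{lm1} for the pair $P(\lambda)=\lambda^{3}+l\lambda^{2}+m\lambda+n$, $Q(\lambda)=l_{1}\lambda+m_{1}+n_{1}$ coming from (\ref{eqn32}), and to read off from the third condition exactly when a pure imaginary root is forced to appear as the pseudo-delay $T$ grows. Condition (\ref{lm11}) of the lemma is free of charge: $F_{1}(\lambda)=P(\lambda)+Q(\lambda)$ is precisely the delay-free characteristic polynomial of Theorem \ref{th31}, so the standing hypothesis that the equilibrium is stable at $\tau=0,\ \delta=0$ already gives $F_{1}(\lambda)\neq 0$ for $\Re(\lambda)\geq 0$. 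The real content of Theorem \ref{th34} therefore lies in ruling out (or else locating) pure imaginary zeros of $F_{2}$ and, above all, of $F_{3}$, since by the lemma the first value of $T$ at which $F_{3}(i\nu)=0$ becomes solvable is the instant at which a root crosses the imaginary axis and stability can switch.

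First I would dispose of condition (\ref{lm12}) using the sub-conditions already extracted for $F_{2}$: setting $F_{2}(i\nu)=0$ forces simultaneously $\nu^{2}=m-l_{1}$ and $\nu^{2}=(n-m_{1}-n_{1})/l$, so any one of $m-l_{1}\le 0$, $l(n-m_{1}-n_{1})\le 0$, or $l(m-l_{1})\neq n-m_{1}-n_{1}$ destroys a common positive root and secures (\ref{lm12}). The heart of the argument is condition (\ref{lm13}). Here I would substitute $\lambda=i\nu$ into $F_{3}(\lambda)=0$, clear the factor $1+\lambda T$ (legitimate since $1+i\nu T\neq 0$), and split into the real part $\nu^{4}+((l_{1}-m)T-l)\nu^{2}+(n+m_{1}+n_{1})=0$ and the imaginary part $-(lT+1)\nu^{3}+(m+nT-m_{1}T-n_{1}T+l_{1})\nu=0$. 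Solving the imaginary part for $\nu\neq 0$ isolates the single expression $\nu^{2}=\frac{(l_{1}+m)+(n-m_{1}-n_{1})T}{1+lT}$, which is the pivot of the entire case analysis.

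The three alternatives of the theorem then correspond to the three ways this pivot can behave. For case (1) I would show that the stated sign hypotheses force the quadratic $l(n-m_{1}-n_{1})T^{2}+[l(l_{1}+m)+(n-m_{1}-n_{1})]T+(l_{1}+m)$ to keep a fixed sign for every $T>0$, so that $\nu^{2}$ never attains an admissible positive value and $F_{3}$ has no pure imaginary zero, giving preservation of stability for all $\tau>0$. For the remaining cases I would feed the admissible $\nu^{2}$ back into the real part; after clearing $(1+lT)^{2}$ and collecting powers of $T$ this reduces to the cubic (\ref{eqn33}), $\overline{A}T^{3}+\overline{B}T^{2}+\overline{C}T+\overline{D}=0$. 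If $\overline{A},\overline{B},\overline{C},\overline{D}\ge 0$ (case (2)) the cubic has no root in $T>0$, so no crossing occurs and stability is again preserved; if some coefficient is negative (case (3)), a sign change of the cubic on $(0,\infty)$ yields a positive root $T_{+}$ by the intermediate value theorem, at which $\nu_{+}$ and hence $\lambda=i\nu_{+}$ solve $F_{3}=0$, forcing a stability switch, with the associated delay recovered from the pseudo-delay relation of Lemma \ref{lm1} as $\tau_{+}=\frac{2}{\nu_{+}}\tan^{-1}(\nu_{+}T_{+})$.

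The main obstacle I anticipate is computational rather than conceptual: passing from $F_{3}(i\nu)=0$ to the single cubic (\ref{eqn33}) requires substituting the rational expression for $\nu^{2}$ into the real part and bookkeeping the coefficients $\overline{A},\dots,\overline{D}$ without error, then matching their sign patterns to the qualitative conclusions (preservation versus switch). A secondary point demanding care is the sign analysis in case (1): since $1+lT>0$ the admissibility of $\nu^{2}$ is governed solely by the sign of its numerator, so the hypotheses must be arranged precisely so that this numerator—equivalently the displayed quadratic in $T$—cannot vanish for positive $T$, which is exactly what prevents a fresh imaginary root from entering and thereby certifies no change of stability.
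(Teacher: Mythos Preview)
Your proposal is essentially the same argument as the paper's: apply Lemma~\ref{lm1} to $P(\lambda)=\lambda^{3}+l\lambda^{2}+m\lambda+n$, $Q(\lambda)=l_{1}\lambda+m_{1}+n_{1}$, dispatch $F_{1}$ via Theorem~\ref{th31}, handle $F_{2}(i\nu)$ by the incompatibility of $\nu^{2}=m-l_{1}$ and $\nu^{2}=(n-m_{1}-n_{1})/l$, and for $F_{3}(i\nu)$ extract $\nu^{2}=\dfrac{(l_{1}+m)+(n-m_{1}-n_{1})T}{1+lT}$ from the imaginary part and feed it into the real part to obtain the cubic~(\ref{eqn33}) whose sign pattern yields the three alternatives. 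One small slip to fix: in your closing paragraph you identify the numerator of $\nu^{2}$ with ``the displayed quadratic in $T$,'' but that numerator is linear; the quadratic the paper writes is the product $(\text{numerator})(1+lT)$, so be careful when you turn the sign condition on $\nu^{2}$ into a polynomial inequality.
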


We shall now study the influence of time delay only in recovery on the stability of the system. We wish to understand clearly whether the time delays $\tau$ and $\delta$ influence independently or are they introduced out of mathematical curiosity. To be specific, if $\delta$ has no influence on the system, better we ignore it and proceed with $\tau$ only.

\subsection{\textbf{Case in which the parameters $\tau=0$ and $\delta>0$}}
\label{subsec33}
In this case the characteristic equation (\ref{eqn25}) reduces to 
\begin{equation}
\label{eqn34}
F(\lambda)\equiv \lambda^3 + l\lambda^2 + (l_1 +m) \lambda +n+m_1 + n_1 e^{-\lambda\delta} =0.
\end{equation}

We shall utilize Lemma \ref{lm1} again to analyze the stability behaviour of the system in this case also. The following cases may arise
\begin{enumerate}

\item Let  $e^{-\lambda\delta}=1$ in (\ref{eqn34}) to get $$F_1(\lambda)\equiv \lambda^3 + l\lambda^2 + (l_1 +m) \lambda +n+m_1 + n_1 =0.$$ Recalling Theorem \ref{th31}, $F_1(\lambda)\ne 0$ for $\Re(\lambda) \geq 0.$ 

\item Let $e^{-\lambda\delta}=-1$ in (\ref{eqn34}) to get $$F_2(\lambda)\equiv \lambda^3 + l\lambda^2 + (l_1 +m) \lambda +n+m_1 - n_1 =0.$$ Letting $\lambda = i\nu,$ and separating real and imaginary parts, we get $$F_2(i\nu) = n+m_1 - n_1  -l\nu^2 + (l_1+m-\nu^2)\nu i.$$ It is easy to see that $F_2(i\nu) \ne 0$ if
\begin{equation}\label{eqn35} l(l_1+m) \ne n+m_1 -n_1.\end{equation} This condition ensures no pure imaginary zeros in limiting case.

\item Consider $e^{-\lambda\delta}=\frac{1-\lambda T}{1+\lambda T}.$ Then (\ref{eqn34}) gives $$F_3(\lambda)\equiv \lambda^3 + l\lambda^2 + (l_1 +m) \lambda +n+m_1 + n_1 \frac{1-\lambda T}{1+\lambda T}.$$Now $F_3(\lambda) =0$ if and only if $$[\lambda^3 + l\lambda^2 + (l_1 +m) \lambda +n+m_1 - n_1]\lambda T + \lambda^3 + l\lambda^2 + (l_1 +m) \lambda +n+m_1 + n_1 =0.$$ For pure imaginary zeros, we let $\lambda =i\nu$ in the above. Separating real and imaginary parts, we get real part as $$[\nu^4 -(l_1+m)\nu^2]T -l\nu^2 +n+m_1+n_1 =0.$$ and imaginary part as $$ [-l\nu^3 +(n+m_1-n_1)\nu]T + [-\nu^3 + (l_1+m)\nu]=0.$$

Eliminating $T$ and rearranging we get for $\nu \ne 0$

\begin{equation}
\label{eqn36}
\nu^6 + (l^2 -2(l_1+m))\nu^4 + [(l_1+m)^2-2l(n+m_1)]\nu^2 +(n+m_1)^2-{n_1}^2 =0.
\end{equation}
If the inequalities
\begin{equation}
\label{eqn37}
l^2 -2(l_1+m) \geq 0,\;\; (l_1+m)^2-2l(n+m_1)\geq 0 \;\; \mbox{and} \;\; (n+m_1)^2-{n_1}^2 \geq 0
\end{equation}
 hold then (\ref{eqn36}) yields no real solution for $\nu.$ Thus, the stability/instability of (\ref{eqn34}) is preserved for any length of delay $\delta.$

\end{enumerate}

\subsubsection{\textbf{Change of stability}}
The following cases may arise:
\begin{enumerate}
\item If any of the following conditions
\begin{eqnarray}
\label{eqn38}
&(i).& l^2 < 2(l_1+m)), (l_1+m)^2 > 2l(n+m_1), (n+m_1)^2 > {n_1}^2 \nonumber \\
&(ii).& l^2 > 2(l_1+m)), (l_1+m)^2 < 2l(n+m_1), (n+m_1)^2 > {n_1}^2 \nonumber \\
&(iii).& l^2 > 2(l_1+m)), (l_1+m)^2 > 2l(n+m_1), (n+m_1)^2 < {n_1}^2 \nonumber \\
&(iv).& l^2 < 2(l_1+m)), (l_1+m)^2 < 2l(n+m_1), (n+m_1)^2 < {n_1}^2 \nonumber \\ 
\end{eqnarray}
hold, a change of sign indicates that (\ref{eqn36}) has one $\nu^2 >0$ yielding two possibilities for $\nu.$ For these values of $\nu,$ we have
\begin{eqnarray}
\label{eqn39}
&T &= \frac{\nu^2 -(l_1+m)}{n+m_1-n_1-l\nu^2} \\ \nonumber
&\delta& =\frac{2}{\nu} [tan^{-1} \nu T +k\pi], \forall k \in \mathbb{Z}.
\end{eqnarray}

Certainly, $T>0$ exists if $\nu^2$ lies between $l_1+m$ and $\frac{n+m_1-n_1}{l}$ and at least one of these terms is positive. This gives rise to a change of stability.

\item If conditions
\begin{equation} \label{eqn310} l^2 < 2(l_1+m)), \;\; (l_1+m)^2 > 2l(n+m_1)\;\; \mbox{and} \;\; (n+m_1)^2 < {n_1}^2 \end{equation}hold, (\ref{eqn36}) yields two values for $\nu^2>0$ and there will be four real values for $\nu$. Again from (\ref{eqn39}) we get two values each for $T$ say $T_1$ and $T_2$ and $\delta$ say $\delta_1$ and $\delta_2.$ If $0 <\delta_1 < \delta_2$ holds, then a change in stability occurs first at $\delta_1$ and a second bifurcation at $\delta_2.$
\end{enumerate}

Similar arguments extend to the three values of $\nu^2>0.$ We consolidate the above discussion as Theorem \ref{th35}
\begin{theorem} 
\label{th35}
\begin{enumerate}[(i).]
\item The stability/instability of $(x^*,y^*,z^*)$ is preserved for any length of delay $\delta$ if the conditions (\ref{eqn35}) and conditions (\ref{eqn37}) together hold. \\
\item If any of the conditions (\ref{eqn38}) holds then there exists a $\tilde \nu $ where ${\tilde{\nu}}^2$ lies between $l_1+m$ and $\frac{n+m_1-n_1}{l}$ satisfying (\ref{eqn36}) and there is a change in stability for $\delta =\frac{2}{\nu} tan^{-1} \tilde{\nu} \tilde{T}$, where $\tilde{T}=\frac{\tilde{\nu^2} -(l_1+m)}{n+m_1-n_1-l\tilde{\nu}^2} >0.$ \\
\item If (\ref{eqn310}) holds then there is a possibility of a second bifurcation.$\square$
\end{enumerate}
\end{theorem}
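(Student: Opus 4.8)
The plan is to read the theorem as a systematic application of Lemma \ref{lm1} to the characteristic equation (\ref{eqn34}), written in the form $P(\lambda)+Q(\lambda)e^{-\lambda\delta}=0$ with $P(\lambda)=\lambda^3+l\lambda^2+(l_1+m)\lambda+(n+m_1)$ and the constant $Q(\lambda)=n_1$. With this identification the auxiliary functions $F_1,F_2,F_3$ of the lemma are exactly those computed in the discussion preceding the statement, so the entire argument reduces to deciding when each of the three conditions of Lemma \ref{lm1} holds and matching the outcome against the algebraic inequalities (\ref{eqn35}), (\ref{eqn37}), (\ref{eqn38}) and (\ref{eqn310}).

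For part (i) I would check the three conditions of Lemma \ref{lm1} in turn. Condition (\ref{lm11}) is merely stability of the delay-free polynomial $F_1(\lambda)=\lambda^3+l\lambda^2+(l_1+m)\lambda+(n+m_1+n_1)$, which is already granted by Theorem \ref{th31}. Condition (\ref{lm12}) can fail only if $F_2(i\nu)=0$ for some real $\nu\neq0$; separating real and imaginary parts forces simultaneously $\nu^2=l_1+m$ and $l\nu^2=n+m_1-n_1$, i.e. $l(l_1+m)=n+m_1-n_1$, which is precisely what (\ref{eqn35}) excludes. Condition (\ref{lm13}) can fail only if $F_3(i\nu)=0$ for some finite $T>0$; eliminating $T$ between the real and imaginary parts produces the cubic (\ref{eqn36}) in $w=\nu^2$, and the sign hypotheses (\ref{eqn37}) make every coefficient of that cubic nonnegative, so it has no positive root and hence no admissible $\nu$. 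Thus all three conditions hold and stability persists for every $\delta>0$. For the instability half of the statement Lemma \ref{lm1} does not apply directly, since its first hypothesis already fails, so there I would instead use continuous dependence of the roots on $\delta$: because (\ref{eqn35}) and (\ref{eqn37}) rule out purely imaginary roots for all $\delta\geq0$, no root can cross the imaginary axis, the number of right half-plane roots is constant in $\delta$, and instability at $\delta=0$ is preserved.

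For parts (ii) and (iii) the work shifts entirely to the cubic (\ref{eqn36}) viewed as a polynomial $g(w)=w^3+(l^2-2(l_1+m))w^2+[(l_1+m)^2-2l(n+m_1)]w+((n+m_1)^2-n_1^2)$ in $w=\nu^2$. Each sign configuration in (\ref{eqn38}) flips one or more coefficients away from the all-nonnegative baseline of (\ref{eqn37}); I would use Descartes' rule together with the intermediate value theorem (noting $g(0)=(n+m_1)^2-n_1^2$ and $g(+\infty)=+\infty$) to extract a positive root $w=\tilde{\nu}^2$, then recover $\tilde{T}$ and the critical delay $\delta=\tfrac{2}{\tilde{\nu}}\tan^{-1}(\tilde{\nu}\tilde{T})$ from (\ref{eqn39}), at which $F_3$ acquires an imaginary zero, condition (\ref{lm13}) is violated, and a stability switch occurs. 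Under (\ref{eqn310}) the coefficient pattern admits more than one positive root of (\ref{eqn36}), hence more than one critical value $0<\delta_1<\delta_2$, which gives the possibility of a second bifurcation.

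The step I expect to be the main obstacle is not the root counting but the positivity of $\tilde{T}$. The formula $\tilde{T}=(\tilde{\nu}^2-(l_1+m))/((n+m_1-n_1)-l\tilde{\nu}^2)$ is positive only when numerator and denominator share a sign, that is, only when $\tilde{\nu}^2$ lies strictly between $l_1+m$ and $(n+m_1-n_1)/l$. An arbitrary positive root of (\ref{eqn36}) need not fall in this window, so the delicate point is to show that the particular sign patterns (\ref{eqn38}) and (\ref{eqn310}) actually force the relevant positive root(s) into that interval, equivalently that the $\nu$ produced by the imaginary-part relation is consistent with the real-part relation for a genuine $T>0$. Establishing this interval membership is where the argument needs the most care, since it is what guarantees each critical $\delta$ is a true crossing value rather than a spurious one; the transversality of the crossing itself is then delivered by the necessary-and-sufficient character of Lemma \ref{lm1} noted after its statement.
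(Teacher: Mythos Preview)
Your proposal is correct and follows essentially the same route as the paper: the theorem is simply a consolidation of the discussion immediately preceding it, which applies Lemma~\ref{lm1} to (\ref{eqn34}) via the same identification $P(\lambda)=\lambda^3+l\lambda^2+(l_1+m)\lambda+(n+m_1)$, $Q(\lambda)=n_1$, and then reads off conditions (\ref{eqn35}), (\ref{eqn37}), (\ref{eqn38}), (\ref{eqn310}) from $F_1,F_2,F_3$ exactly as you describe. Your treatment is in fact slightly more careful than the paper's in two places: you supply the continuous-dependence argument for the instability half of part~(i), which the paper leaves implicit, and you correctly flag the positivity of $\tilde T$ as the delicate point --- the paper does not prove that the root of (\ref{eqn36}) produced by (\ref{eqn38}) lands in the required interval, but simply builds the interval condition into the statement of part~(ii) and asserts that a change of stability follows.
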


\subsection{\textbf{General Case in which the parameters $\tau>0,$ $\delta>0$}}
\label{subsec34}
We now consider the characteristic equation of the system (\ref{eqn21}) given by (\ref{eqn25}). The condition for a change in stability (occurrence of Hopf bifurcation) is indicated by a zero of the real part of $\lambda=\mu+i\nu$ of (\ref{eqn25}). Letting $\mu=0$ in (\ref{eqn26}) we get $$\Re(F(i\nu))= -l\nu^{2}+n+n_{1}\cos\nu(\tau+\delta)+l_{1}\nu\sin\nu\tau+m_{1}cos\nu\tau.$$ $$\Im(F(i\nu))=-\nu^{3}+m\nu-n_{1}\sin\nu(\tau+\delta)+l_{1}\nu\cos\nu\tau+m_{1}\sin\nu\tau.$$ Squaring and adding, we get
\begin{eqnarray}
\label{eqn311}
\Psi(\nu) &\equiv & \nu^{6}+(l^{2}-2m)\nu^{4}+(m^{2}-2ln-l_{1}^{2})\nu^{2}+(n^{2}+n_{1}^{2}-m_{1}^{2})
\nonumber\\
&&+ 2(n-l\nu^{2})\cos\nu(\tau+\delta)+2(m\nu-\nu^{3})\sin\nu(\tau+\delta).
\end{eqnarray}

We establish the following result.
\begin{theorem}
\label{th36}
Assume that the equilibrium solution $(x^*,y^*,z^*)$ of (\ref{eqn21}) is stable (unstable) for $\tau=0$ and $\delta =0.$ \\
\begin{enumerate}[(i).]
\item The stability (instability) of $(x^*,y^*,z^*)$ is preserved for any $\tau>0$ and $\delta >0$ provided the conditions
\begin{equation}
\label{eqn312}
\left( \frac{|l_1|+\sqrt{l_1^2 +4l(n+|n_1|+|m_1|)}}{2l} \right)^2 \leq \frac{n_1^2-(m_1^2 +2)}{l_1^2} \; \mbox{and} \; n_1^2 > m_1^2 +2
\end{equation}
hold. \\
\item A change of stability or instability occurs if the condition $n^2 +2n +n_1^2 < m_1^2$ holds.
\end{enumerate}
\end{theorem}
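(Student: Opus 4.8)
The plan is to reduce the preservation or loss of stability to the (non)existence of purely imaginary characteristic roots and then to analyze the function $\Psi(\nu)$ from (\ref{eqn311}). Since the roots of (\ref{eqn25}) vary continuously with $\tau$ and $\delta$, the number of roots in the open right half-plane can change only when a root crosses the imaginary axis, i.e.\ only when (\ref{eqn25}) admits a root $\lambda=i\nu$ with $\nu>0$. Thus for part~(i) it suffices to prove that, under (\ref{eqn312}), no such $\nu$ exists, whereas for part~(ii) it suffices to exhibit one; in the former case the count of right half-plane roots is frozen, so the stability type at $\tau=\delta=0$ is inherited for all $\tau,\delta>0$.

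For part~(i) I would work from the two scalar equations $\Re(F(i\nu))=0$ and $\Im(F(i\nu))=0$ recorded just before (\ref{eqn311}), extracting two independent restrictions on $\nu$. First, rewriting the real part as $l\nu^2-n=n_1\cos\nu(\tau+\delta)+l_1\nu\sin\nu\tau+m_1\cos\nu\tau$ and applying $|\cos|,|\sin|\le 1$ gives $l\nu^2-|l_1|\nu-(n+|n_1|+|m_1|)\le 0$, so that any frequency at which a crossing could occur satisfies $\nu^2\le\nu_{\max}^2$, where $\nu_{\max}$ is the positive root of $l\nu^2-|l_1|\nu-(n+|n_1|+|m_1|)=0$; its square is precisely the left-hand side of (\ref{eqn312}). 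Second, I would bound the oscillatory part of $\Psi(\nu)$ by its amplitude, $|2(n-l\nu^2)\cos\nu(\tau+\delta)+2(m\nu-\nu^3)\sin\nu(\tau+\delta)|\le 2\sqrt{(n-l\nu^2)^2+(m\nu-\nu^3)^2}$, and use AM--GM to absorb the resulting degree-six term, producing a lower bound of the form $\Psi(\nu)\ge n_1^2-(m_1^2+2)-l_1^2\nu^2$. On the admissible range $\nu^2\le\nu_{\max}^2$ this lower bound is nonnegative exactly when $l_1^2\nu_{\max}^2\le n_1^2-(m_1^2+2)$ together with $n_1^2>m_1^2+2$, which is (\ref{eqn312}). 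Hence $\Psi(\nu)>0$ wherever $\Re(F(i\nu))$ can vanish, so no purely imaginary root exists and the stability (instability) persists.

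For part~(ii) the argument is a single sign change. Evaluating (\ref{eqn311}) at $\nu=0$ gives $\Psi(0)=n^2+2n+n_1^2-m_1^2$, which is negative under the hypothesis $n^2+2n+n_1^2<m_1^2$. Since $\Psi(\nu)\to+\infty$ as $\nu\to\infty$, the intermediate value theorem yields a $\nu_\ast>0$ with $\Psi(\nu_\ast)=0$; back-solving the phase from $\Re(F(i\nu_\ast))=0$ and $\Im(F(i\nu_\ast))=0$ then recovers an admissible value of $\tau+\delta$ (hence nonnegative $\tau,\delta$) at which $i\nu_\ast$ is a genuine characteristic root, signalling a crossing of the imaginary axis and a change of stability.

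I expect the main obstacle to lie in part~(i): the amplitude bound on $\Psi$ discards the dominant $\nu^6$ term and is therefore conclusive only for bounded $\nu$, so the real-part restriction $\nu\le\nu_{\max}$ is indispensable for closing the gap at intermediate frequencies, and the two halves of (\ref{eqn312}) must be shown to align with the two bounds exactly, including the additive constant generated by the AM--GM step. A secondary subtlety in part~(ii) is transversality together with the splitting of the recovered $\tau+\delta$ into nonnegative delays; strictly this requires checking that $\Re(\lambda)$ moves off the axis with nonzero speed at the crossing, which I would verify by implicit differentiation of (\ref{eqn25}) with respect to $\delta$.
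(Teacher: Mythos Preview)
Your proposal is correct and matches the paper's approach almost exactly: bound the admissible crossing frequency $\nu_0$ from $\Re(F(i\nu_0))=0$, bound the oscillatory part of $\Psi$ from below, complete squares to reduce to $l_1^2\nu_0^2 < n_1^2-(m_1^2+2)$, and for part~(ii) check $\Psi(0)<0$ and apply the intermediate value theorem. The only cosmetic difference is that the paper bounds the trigonometric terms by the cruder $2|n-l\nu^2|+2\nu|m-\nu^2|$ rather than your amplitude $2\sqrt{(n-l\nu^2)^2+(m\nu-\nu^3)^2}$, and then completes \emph{two} squares $(|n-l\nu^2|-1)^2+(\nu|m-\nu^2|-1)^2\ge0$---this is precisely where the additive constant $2$ in (\ref{eqn312}) comes from (your tighter bound would in fact give a $1$ there); the paper does not address the transversality or delay-splitting issues you flag, simply recording $\tau+\delta=\frac{1}{\nu_+}\tan^{-1}\!\bigl(\frac{m\nu_+-\nu_+^3}{n-l\nu_+^2}\bigr)$ and stopping.
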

\begin{proof}[Proof of Theorem \ref{th36}] 

The conditions for preservation of stability or instability are given by $\Re(F(i\nu))=0$ and $\Im(F(i\nu)) >0$ at any $\nu =\nu_0$ at $\mu=0.$ Equivalently, if we can show that $\Psi(\nu_0) >0$ then the stability/instability is preserved. Without loss of generality, let $\nu_0$ be the smallest positive root of $\Re(F(i\nu))=0.$ Since,  $n_{1}\cos\nu(\tau+\delta)\leq |n_1|,$ $l_{1}\nu\sin\nu\tau \leq |l_1|\nu$ and $m_{1}cos\nu\tau \leq |m_1|$ for any positive $\nu$ we have $l{\nu}^2 -n  \leq |n_1| + |l_1|\nu + |m_1|,$ from $\Re(F(i\nu)).$

Consider $l{\nu}^2 -|l_1|\nu- (n+ |n_1|+ |m_1|)=0.$ This has clearly two roots and the bigger one is given by ${\nu}_+ = \frac{|l_1|+ \sqrt{l_1^2 +4l(n+|n_1|+|m_1|)} }{2l}.$ Then clearly ${\nu}_0 \leq {\nu}_+.$ Now from (3.11), we have $\Psi(\nu) >0$ is implied if $$\nu^{6}+(l^{2}-2m)\nu^{4}+(m^{2}-2ln-l_{1}^{2})\nu^{2}+(n^{2}+n_{1}^{2}-m_{1}^{2})> 2|n-l\nu^{2}|+2\nu|m-\nu^{2}|$$ holds i.e., if $$(|n-l\nu^{2}| -1)^2+ (\nu|m-\nu^{2}| -1)^2 > l_1^2 {\nu_0}^2 -n_1^2 +m_1^2 +2 $$ holds.

The left hand side is  positive quantity and the inequality clearly holds if, ${\nu_0}^2 < \frac{n_1^2 -(m_1^2 +2)}{l_1^2}$ holds. Since $\nu_0 \leq \nu_+$ this inequality is satisfied provided (\ref{eqn312}) holds. Thus, $\Psi(\nu_0) >0,$ and hence, no change of stability (instability) occurs.

Again, from (\ref{eqn311}), we observe that $\Psi(0) <0 $ if $n^2 +2n +n_1^2 < m_1^2$ holds and $\Psi(\nu) >0$ for large $\nu$ and therefore, $\Psi({\nu})=0$  has a real positive solution. Let $\nu_+$ denote smallest such root. Now (\ref{eqn311}) may be written as

\begin{equation}
\label{eqn313}
M\cos\theta+N\sin\theta=L,
\end{equation} 
where, $M=2(n-l\nu^{2})$, $N=2(m\nu-\nu^{3}),$$L=\nu^{6}+(l^{2}-2m)\nu^{4}+(m^{2}-2ln-l_{1}^{2})\nu^{2}+(n^{2}+n_{1}^{2}-m_{1}^{2})$ and $\theta = \tau + \delta.$ For the value of $\nu_+$ so obtained, we have from (\ref{eqn313}), $\theta \equiv \tau+\delta=\frac{1}{\nu_+} tan^{-1}(\frac{m{\nu_+} -{\nu_+}^{3}}{n-l{\nu_+}^{2}}).$ The proof is now complete. $\square$

\end{proof}

\section{Global Stability Results}
\label{sec4}
We consider an important special case of (\ref{eqn21}) in which $f(x,y)\equiv xy,$ the simple interaction term. We further assume that, $p(y)\equiv y$ and $V(x)\equiv x$, the recovery and the vaccination, are linear, then (\ref{eqn21}) takes the form
\begin{eqnarray}
\label{eqn41}
x' &=& a -b xy- d x -cx +\alpha z \nonumber\\
y' &=& b_1 x(t-{\tau})y- ry -d_1 y \nonumber\\
z' &=& r y(t-{\delta}) -\alpha z,
\end{eqnarray} 
for which the equilibrium points are given by the solutions  of
\begin{eqnarray}
\label{eqn42}
b x^{*}y^{*}+ d x^{*}+cx^{*}-\alpha z &=& a  \nonumber\\
b_1 x^{*}y^{*}- ry^{*} -d_1 y^{*} &=& 0 \nonumber\\
r_1 y^{*} -\alpha z^{*} &=& 0.
\end{eqnarray}
Clearly, $(\frac{a}{c+d},0,0)$ is always a solution of (\ref{eqn42}) and is a disease-free equilibrium of (\ref{eqn41}). If $y^{*}\neq 0$,then  from (\ref{eqn42}), $x^{*}=\frac{d_{1}+r}{b_{1}},$$y^{*}=\frac{a-(c+d)x^{*}}{bx^{*}-r} =\frac{b_1 a - (c+d)(d_1+r)}{bd_1 +r(b -b_1)}$ and $z^{*}=\frac{r}{\alpha}y^{*} = \frac{r}{\alpha} \frac{b_1 a - (c+d)(d_1+r)}{bd_1+ r (b-b_1)}$giving rise to a positive equilibrium solution of (\ref{eqn41}), provided that $bx^{*}\neq r,$ $(a-(c+d)x^{*})(bx^{*}-r)>0$ or simply if
\begin{equation}
\label{eqn43}
\frac{d_{1}+r}{b_{1}}< \frac{a}{c+d},
\end{equation}
holds. Since, $b_1 \leq b,$ $\frac{r}{b} < \frac{d_{1}+r}{b_{1}} =x^*,$ we have, $bx^*-r>0.$ \\ Hence, (\ref{eqn43}) is a necessary and sufficient condition for the existence of a positive (endemic) equilibrium for (\ref{eqn41}). In case $x^{*}=\frac{r}{b}=\frac{d_{1}+r}{b_{1}}$, entire $YZ$ plane becomes the equilibria. Now assuming $(x^{*},y^{*},z^{*})$ is a positive equilibrium of (\ref{eqn41}) we formulate the Theorem \ref{th41} as 

\begin{theorem}
 \label{th41}
The positive equilibrium solution $(x^{*},y^{*},z^{*})$ of (\ref{eqn41}) is globally asymptotically stable, independent of time delays, provided the parameters satisfy the condition $b_{1}<\min\{\frac{b(d_{1}+r)}{r},(c+d)\}$. 
\end{theorem}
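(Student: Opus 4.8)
The plan is to establish global asymptotic stability of the positive equilibrium $(x^*,y^*,z^*)$ of the delay system \eqref{eqn41} by constructing a Lyapunov functional of the classical Volterra (logarithmic) type augmented with integral terms that absorb the delayed contributions. Because the claim asserts stability \emph{independent of time delays}, the natural strategy is to find a functional whose derivative along trajectories is negative definite regardless of the values of $\tau$ and $\delta$; this is precisely the situation in which delay-independent Lyapunov constructions succeed. First I would introduce the shifted variables $u=x-x^*$, $v=y-y^*$, $w=z-z^*$ and rewrite the equilibrium relations from \eqref{eqn42} (namely $b_1 x^* = r+d_1$, $a=bx^*y^*+(c+d)x^*-\alpha z^*$, and $\alpha z^*=ry^*$) so that each right-hand side of \eqref{eqn41} can be expressed purely in terms of $u,v,w$ and their delayed values.

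Next I would propose the candidate functional
\begin{equation*}
W(t)=\Big(x-x^*-x^*\ln\tfrac{x}{x^*}\Big)+k_1\Big(y-y^*-y^*\ln\tfrac{y}{y^*}\Big)+\tfrac{k_2}{2}(z-z^*)^2+W_\tau(t)+W_\delta(t),
\end{equation*}
where $k_1,k_2>0$ are weights to be chosen, and $W_\tau,W_\delta$ are integral terms of the form $\int_{t-\tau}^{t}(\,\cdot\,)\,ds$ and $\int_{t-\delta}^{t}(\,\cdot\,)\,ds$ designed so that their time-derivatives cancel the delayed cross terms $b_1 x(t-\tau)y$ and $ry(t-\delta)$ that appear when differentiating the pointwise part. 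I would then compute $\dot W$ along \eqref{eqn41}, use the logarithmic-derivative identities (e.g. $\frac{d}{dt}(x-x^*\ln x)=(1-\tfrac{x^*}{x})x'$), and apply the equilibrium substitutions to reduce $\dot W$ to a quadratic form in $u,v,w$ (after the delay integrals remove the retarded terms). The key inequalities $b_1<\tfrac{b(d_1+r)}{r}$ and $b_1<c+d$ should be exactly what is needed to force the diagonal/coefficient conditions of that quadratic form to be negative, guaranteeing $\dot W\le 0$ with equality only at the equilibrium.

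The hard part will be the bookkeeping in choosing the weights $k_1,k_2$ and the precise integrands of $W_\tau,W_\delta$ so that, after the delayed terms are estimated via the elementary bound $2ab\le a^2+b^2$ (or absorbed exactly), the surviving quadratic form in $(u,v,w)$ is negative definite. The two hypotheses on $b_1$ are presumably engineered to make this form sign-definite: $b_1<c+d$ controls the susceptible equation's interaction coefficient, while $b_1<\tfrac{b(d_1+r)}{r}=\tfrac{bx^*}{r}\cdot\tfrac{r}{x^*}$ relates the infection conversion rate to the recovery channel and should dominate the $v$-terms. I expect the main obstacle to be verifying that the cross terms arising from the delayed recovery $ry(t-\delta)$ feeding into both the $z$-equation and (through $\alpha z$) the $x$-equation do not spoil definiteness; this is where the square term $\tfrac{k_2}{2}w^2$ and a carefully matched $W_\delta$ must be balanced against the $\alpha z$ coupling.

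Finally, having shown $\dot W\le0$ with $\dot W=0$ only at $(x^*,y^*,z^*)$, I would invoke the Lyapunov–LaSalle invariance principle for functional differential equations together with the positivity and boundedness of solutions (the latter following from the structure of \eqref{eqn41} and the non-negativity already noted after \eqref{eqn21}) to conclude that every positive solution converges to the equilibrium, yielding global asymptotic stability independent of $\tau$ and $\delta$ as asserted.
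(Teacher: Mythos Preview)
Your overall strategy---a Lyapunov functional with integral correction terms to absorb the delays, followed by LaSalle---is sound and is exactly what the paper does. However, the paper's construction is markedly simpler than yours and avoids the very obstacle you yourself flag. Instead of the smooth Volterra functional with weights and a quadratic piece in $z$, the paper takes the $L^1$-type functional
\[
V=|x-x^*|+|\log y-\log y^*|+|z-z^*|+b_1\!\int_{t-\tau}^{t}\!|x(u)-x^*|\,du+r\!\int_{t-\delta}^{t}\!|y(u)-y^*|\,du,
\]
and computes its upper Dini derivative. The point is that with absolute values the delayed pieces cancel \emph{exactly}: the integral terms produce $b_1|x-x^*|-b_1|x(t-\tau)-x^*|$ and $r|y-y^*|-r|y(t-\delta)-y^*|$, while $D^+|\log y-\log y^*|\le b_1|x(t-\tau)-x^*|$ and $D^+|z-z^*|\le r|y(t-\delta)-y^*|-\alpha|z-z^*|$. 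The $\alpha|z-z^*|$ terms from the $x$- and $z$-equations also cancel on the nose. What survives is simply
\[
D^+V\le -(c+d-b_1)|x-x^*|-(bx^*-r)|y-y^*|-by|x-x^*|,
\]
and the two hypotheses $b_1<c+d$ and $b_1<b(d_1+r)/r$ (equivalently $bx^*>r$, since $x^*=(d_1+r)/b_1$) make this strictly negative. No weights, no $2ab\le a^2+b^2$ estimates, no quadratic form analysis.

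By contrast, your proposed functional mixes a logarithmic term in $x$ with a quadratic term $\tfrac{k_2}{2}(z-z^*)^2$. Differentiating the $x$-piece produces the factor $(1-x^*/x)$, so the coupling $\alpha(z-z^*)$ enters as $\alpha\,\frac{x-x^*}{x}(z-z^*)$, a state-dependent cross term that cannot be matched by any constant multiple of $(z-z^*)z'$. This is precisely the ``main obstacle'' you anticipate, and it does not resolve cleanly with your ansatz; you would be forced either to bound $1/x$ (losing delay-independence or requiring extra a priori bounds) or to change the functional. The paper's $L^1$ choice sidesteps this entirely because $\mathrm{sgn}(x-x^*)$ replaces $(1-x^*/x)$ and the $\alpha$-terms cancel without any weight tuning.
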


\begin{proof}[Proof of Theorem \ref{th41}]
 Using (\ref{eqn42}) in (\ref{eqn41}) we rewrite (\ref{eqn41}) as 
\begin{eqnarray}
\label{eqn44}
x' &=& -by(x-x^{*})-bx^{*}(y-y^{*})-(c+d)(x-x^{*})+\alpha(z-z^{*})  \nonumber \\
y' &=& b_{1}(x(t-\tau)-x^{*})y \nonumber \\
z' &=& r(y(t-\delta)-y^{*})-\alpha(z-z^{*}).
\end{eqnarray}

Now consider the function $$ V(x,y,z)\equiv |x-x^{*}| +|\log y-\log y^{*}|+ |z-z^{*}|+$$\\$$ b_{1}\int^t_{t-\tau}|x(u)-x^{*}|du+r\int^t_{t-\delta}|y(u)-y^{*}|du.$$ Clearly $V(x^{*},y^{*},z^{*})=0$ and $V(x,y,z)\geq 0$ for $x\geq 0,\; y\geq 0,\; z\geq 0$ and $V\rightarrow\infty$ as $x,y,z\rightarrow\infty.$

The Upper Dini derivative of $V$, along the solutions of (\ref{eqn44}), is

\begin{eqnarray*}
D^{+}V &\leq & -(by+c+d)|x-x^{*}|-bx^{*}|y-y^{*}|+\alpha|z-z^{*}| \\
     && + b_{1}|x(t-\tau)=x^{*}|+r|y(t-\delta)-y^{*}|-\alpha(|z-z^{*}|)\\
     && +b_{1}|x-x^{*}|-b_{1}|x(t-\tau)-x^{*}| \\
     && +r|y-y^{*}|-r|y(t-\delta)-y^{*}|\\
     &\leq & -by|x-x^{*}|-(bx^{*}-r)|y-y^{*}|-(c+d-b_{1})|x-x^{*}|\\
     &\leq & -(c+d-b_{1})|x-x^{*}|-(bx^{*}-r)|y-y^{*}| \\
     &<0,& \quad \mbox{by hypotheses}.
 \end{eqnarray*}
The rest of the argument follows from standard arguments and the proof is complete. $\square$ \\
\end{proof}

We shall now provide a result for the global stability of the disease-free equilibrium. Suppose that $x(t) < \frac{d_1 +r}{b_1},$ for all $t.$ Now solving the second equation of (\ref{eqn41}) as a linear equation in $y,$ we get, $$y(t) = y(0) e^{(b_1 x_{\tau} - d_1 -r)t} \rightarrow 0$$ for sufficiently large $t.$ Using this in the third equation of (\ref{eqn41}), we have for large $t,$ $$z(t) \rightarrow z(0) e^{-\alpha t} \rightarrow 0,$$ then the first equation of (\ref{eqn41}) for sufficiently large $t$ becomes $$x'(t) = a -(c+d)x(t)$$ whose solution is $$x(t) = \frac{a}{c+d} + x(0) e^{-(c+d)t}.$$
 Thus, we have formulate the Theorem \ref{th42} as,
\begin{theorem}
\label{th42}
The disease free equilibrium $(x^{*},0,0)$ of (\ref{eqn41}) is globally asymptotically stable if the parameters satisfy the condition $\frac{aS}{c+d}<\frac{d_{1}+r}{b_{1}}.$ $\square$
\end{theorem}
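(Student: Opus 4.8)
The plan is to follow the cascade already sketched just before the statement, reducing global attractivity to a single threshold fact about the susceptible component. I would first read the hypothesis as $\frac{a}{c+d}<\frac{d_1+r}{b_1}$ (the stray $S$ being a typo; $\frac{a}{c+d}$ is exactly the susceptible value at the disease-free equilibrium $(x^*,0,0)$), so that the regime under study is precisely the reverse of the endemic-existence inequality (\ref{eqn43}) and hence the one in which no positive equilibrium exists. Granting that $x(t)$ eventually lies below $\frac{d_1+r}{b_1}$, the second equation of (\ref{eqn41}), read as a scalar linear equation in $y$, integrates to $y(t)=y(t_0)\exp\!\left(\int_{t_0}^{t}(b_1 x(s-\tau)-d_1-r)\,ds\right)$ with an eventually negative exponent, forcing $y\to 0$; substituting into the third equation and using variation of parameters gives $z\to 0$; and the first equation then becomes asymptotically autonomous, $x'\approx a-(c+d)x$, whose solution tends to $\frac{a}{c+d}$. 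Thus everything hinges on the eventual sub-threshold bound for $x$.

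The step I expect to be the genuine obstacle is justifying $x(t)<\frac{d_1+r}{b_1}$ for all large $t$ from the hypothesis alone, because the relapse term $\alpha z$ in the $x$-equation feeds the infected population back into the susceptibles. A plain fluctuation-lemma (limsup/liminf) argument does \emph{not} close by itself: writing $x^{\infty},y^{\infty},z^{\infty}$ for the respective limit superiors, the three equations yield only $x^{\infty}\le\frac{a+\alpha z^{\infty}}{c+d}$, $z^{\infty}\le\frac{r\,y^{\infty}}{\alpha}$, and (whenever $y^{\infty}>0$) $x^{\infty}\ge\frac{d_1+r}{b_1}$; the first two chain to $x^{\infty}\le\frac{a+r\,y^{\infty}}{c+d}$, which still leaves $y^{\infty}$ unconstrained. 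The feedback loop ``large $y\Rightarrow$ large $z\Rightarrow$ large $x\Rightarrow$ larger $y$'' must be broken before the threshold can be extracted, and this is where the real work lies.

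To break the loop I would first secure a genuine a priori bound on all three components (non-negativity is already noted in Section~\ref{sec2}; for the upper bound I would exploit the stabilising term $-bxy$, estimating $x+z$ through $(x+z)'\le a-(c+d)x+r\,y(t-\delta)$ together with the $y$-equation and controlling the delayed quantities by their suprema over the delay windows). With boundedness in hand I would bootstrap via time averages: integrating the $x$- and $z$-equations and dividing by $t$ forces $\alpha\,\overline{z}=r\,\overline{y}$ and $(c+d)\,\overline{x}\le a+r\,\overline{y}$, after which feeding these back into the integral representation of $y$ and invoking $\frac{a}{c+d}<\frac{d_1+r}{b_1}$ should make the averaged exponent in $y(t)$ eventually negative, yielding $y\to 0$. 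Once $y\to 0$ is established, the convergences $z\to 0$ and $x\to\frac{a}{c+d}$ follow immediately from the displayed computations, and a standard theorem on asymptotically autonomous systems completes the argument.
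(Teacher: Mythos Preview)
Your cascade is exactly the paper's argument: assume $x(t)<\frac{d_1+r}{b_1}$, integrate the $y$-equation to get $y\to 0$, feed this into the $z$-equation to get $z\to 0$, and then reduce the $x$-equation to $x'=a-(c+d)x$, whose solution tends to $\frac{a}{c+d}$. Where you depart from the paper is in worrying about \emph{how} the sub-threshold bound on $x$ is obtained from the parametric hypothesis. The paper does not address this at all: it opens with ``Suppose that $x(t)<\frac{d_1+r}{b_1}$ for all $t$'' as a bare assumption and proceeds directly to the cascade, never returning to close the loop between the inequality on parameters and the pointwise bound on the trajectory. The feedback obstruction you identify through the $\alpha z$ term, and the fluctuation/time-average machinery you propose to overcome it, are entirely absent from the paper's treatment. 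In short, your proposal reproduces the paper's argument and then goes beyond it by naming and attempting to fill a gap that the paper simply leaves open; the additional rigor is yours, not the paper's.
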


\begin{remark}
Comparing the parametric conditions of Theorem \ref{th41} and Theorem \ref{th42}, we may notice that the existence of a positive equilibrium destabilizes the disease-free equilibrium and vice versa. $\square$ \\
\end{remark}

We shall now present some examples to illustrate the results as well as understand the influence of vaccination and treatment efforts.

\section{Examples and Simulations}
\label{sec5}
We have carried out simulations using subroutines developed in Matlab. Further, we have employed built-in function DDE23 \cite{33a}, a variable step size numerical integration routine for solving delay differential equations in Matlab. For more details on the DDE23 solver readers may refer to www.mathworks.com/dde\_tutorial. We have utilized the plot function and other advanced graphical tools available in Matlab to obtain results and figures presented in this work.

In all figures, curves drawn with dash-dot line and diamond marker ({$-.\meddiamond-.$}) denote exposed populations, while infected are represented by dashed line with pentagram marker ({$--\pentagram--$}) and curves denoted by solid line with square marker ({$-\medsquare-$}) stand for recovered populations. Simulations are carried out for various lengths of delays in all three cases $\tau >0,\, \delta =0,$ $\tau =0,\, \delta >0$ and $\tau >0,\, \delta >0.$ Figures generated stand for all these cases unless specified. Whenever there is a change in stability for a particular length of delay, the delay length is specified.

First, we consider the system (\ref{eqn41}) or (\ref{eqn21}) with $f(x,y)\equiv xy$, $V(x)\equiv x$ and $P(y)\equiv y$ to illustrate the results. 

\subsection{\textbf{Example 5.1}}
\label{ex51}
Consider the system,
\begin{eqnarray}
\label{eqn51}
x' &=& 10-xy-2x+z \nonumber \\
y' &=& x(t-\tau)y-2y \nonumber \\
z' &=& y(t-\delta)-z,
\end{eqnarray}
obtained by letting $b=c=d=d_{1}=b_{1}=\alpha=r=1$ and $a=10.$ Clearly (2,6,6) is a positive equilibrium of (\ref{eqn51}). It is easy to see that for $\tau = 0$, $\delta = 0$ (2,6,6) of (\ref{eqn51}) is stable by virtue of Theorem \ref{th31}.

For these values of parameters, we have $\overline{A}=756$, $\overline{B}=1488$,$\overline{C}=500$,$\overline{D}=84,$ all are positive and equation (\ref{eqn35}) becomes $756 T^{3}+1488 T^{2}+500T+84=0$ and has no solution for $T>0.$ Thus, the positive equilibrium of the system (\ref{eqn51}) remains stable for any length of delay $\tau>0$ when $\delta=0.$ 

Again the conditions of (\ref{eqn38}) are satisfied, and hence, by Theorem \ref{th35}, $\delta >0,\;\tau=0$ have no influence on the stability. Also, it may be noticed that the parametric conditions of Theorem \ref{th41} are satisfied here, and hence, $(2,6,6)$ is globally asymptotically stable by virtue of Theorem \ref{th41}, showing that the infection is widely prevalent. At the same time, conditions of Theorem \ref{th42} are violated, hence, no possibility of a disease-free environment. These dynamics may be observed in Figure \ref{fig3}.

\begin{figure}[!th]
  \includegraphics[height=60mm,width=\textwidth]{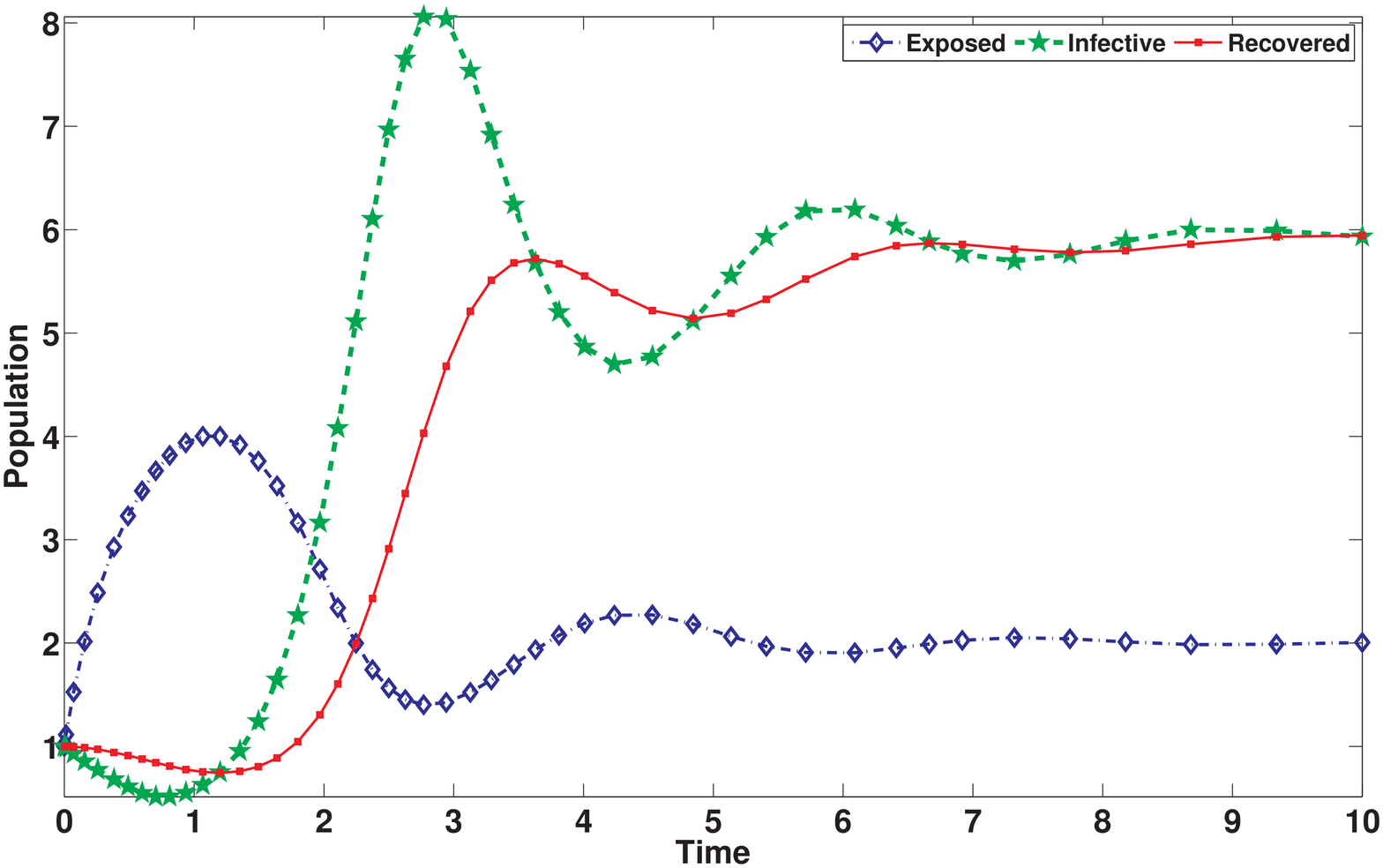}\\
  \caption{Global stability of endemic equilibrium $(2,6,6)$ of System (\ref{eqn51}) in Example 5.1 for all $\tau \geq 0,\; \delta \geq 0$.}\label{fig3}
\end{figure}

\subsection{\textbf{Example 5.2}}
\label{ex52}
Consider the system,
\begin{eqnarray}
\label{eqn52}
x' &=& 10-xy-2x+z \nonumber \\
y' &=& x(t-\tau)y-5y \nonumber \\
z' &=& 4y(t-\delta)-z,
\end{eqnarray}
wherein,  all the parameters are same as in (\ref{eqn51}) except that $r=4$ here. This example studies the influence of a higher treatment rate of the effected people than in earlier example. Clearly $x^{*}=\frac{d_{1}+r}{b_{1}} = 5,$$y^{*}=\frac{a-(c+d)x^{*}}{bx^{*}-r} = 0,$ $z^* = 0.$ Thus, $(5,0,0)$ is an equilibrium solution of (\ref{eqn52}). \\

The characteristic equation (\ref{eqn24}) further reduces to $F(\lambda)=\lambda^{3}+3\lambda^{2}+2\lambda =0$ for which the roots are $\lambda=0,\;\frac{-3\pm\sqrt{5}}{2}.$ This clearly shows that (\ref{eqn52}) is stable and behaves as a delay free system. This is further supported by the observation that the conditions of Theorem \ref{th42} for the global stability of $(5,0,0)$ are satisfied, and thus, the disease has no influence. Dynamics of the system (\ref{eqn52}) are shown in Figure \ref{fig4}. 

\begin{figure}[th]
  \includegraphics[height=60mm,width=\textwidth]{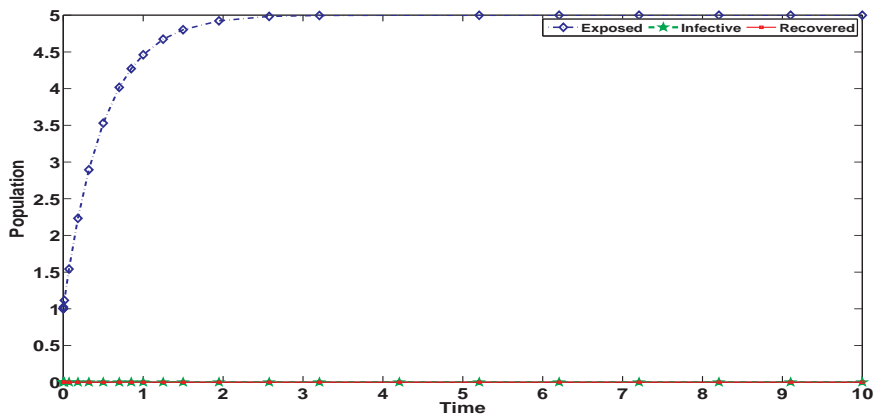}\\
  \caption{Disease free and delay free environment in System (\ref{eqn52}): solutions approaching disease free equilibrium $(5,0,0)$ for all lengths of delays under a high treatment rate.}
	\label{fig4}
\end{figure}

\subsection{\textbf{Example 5.3}}
\label{ex53}
Consider the system,
\begin{eqnarray}
\label{eqn53}
x' &=& 10-xy-4x+z \nonumber \\
y' &=& x(t-\tau)y-2y \nonumber \\
z' &=& y-z.
\end{eqnarray}

We now study the influence of choosing higher amount of vaccination i.e, we set the parameter $c=3$ in the Example \ref{ex51}. Clearly $x^{*}=\frac{d_{1}+r}{b_{1}}=2$, $y^{*}=\frac{a-(c+d)x^{*}}{bx^{*}-r}=2,$ $z^{*}=2.$ \\

Applying Theorem \ref{th31}, we get stability of the delay-free system. Now (\ref{eqn33}) takes the form $G(T)\equiv 42T^{3}-46T^{2}-117T-8=0.$ This equation has a positive solution for T, $T_{+}=2.325 (approx).$ Further, $\nu^{2}=\frac{3-T}{1+6T}>0$ yields $\nu=\nu_{+}=0.2125.$ For these values, we have $\tau_{+}=\frac{2}{\nu_{+}}[\tan^{-1}\nu T ] =4.3204.$ As $G'(T)>0$ for $T>2$ there is no further change of stability after $\tau=\tau_{+}=4.3204.$ Thus, (2,2,2) remains unstable for $\tau>\tau_{+}=4.3204.$ Since $a_0 =-4 <0,$ the instability of $(2,2,2)$ prevails for $\tau > \tau_{+}$ as per Theorem \ref{th33}. Again the condition $n^2+2n+n_{1}^2 < m_{1}^2 $ of Theorem \ref{th36} is satisfied, also indicating a change in stability of the delay free system in general case. The behaviour of solutions of (\ref{eqn53}) for various lengths of delay are shown in Figures \ref{fig5}, \ref{fig6} and \ref{fig7}.

\begin{figure}
	\begin{subfigure}[b]{0.5\textwidth}
                \includegraphics[height=50mm,width=\textwidth]{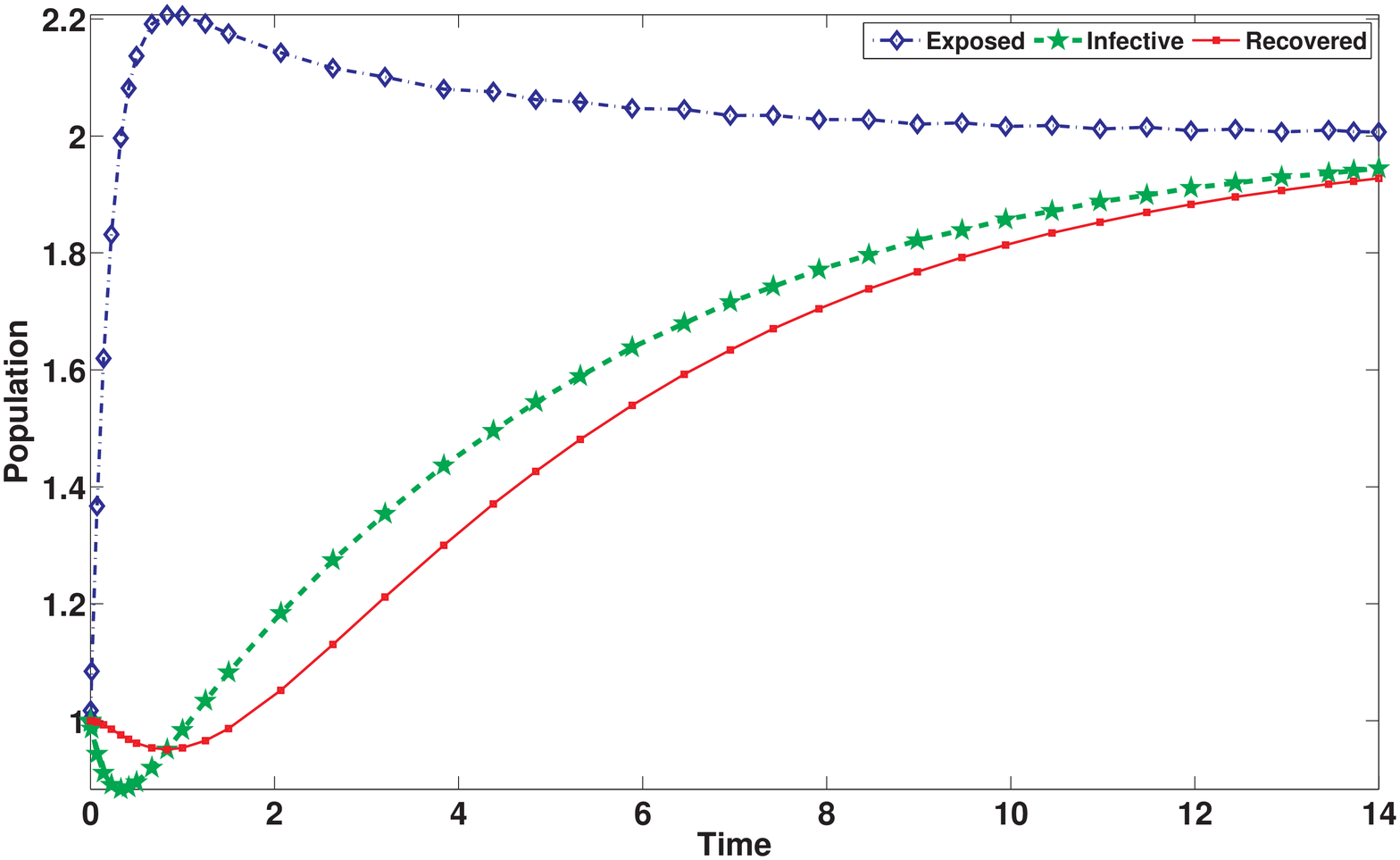}
                \caption{$\tau=0$}
                \label{fig5}
        \end{subfigure}%
        ~ 
        \begin{subfigure}[b]{0.5\textwidth}
                \includegraphics[height=50mm,width=\textwidth]{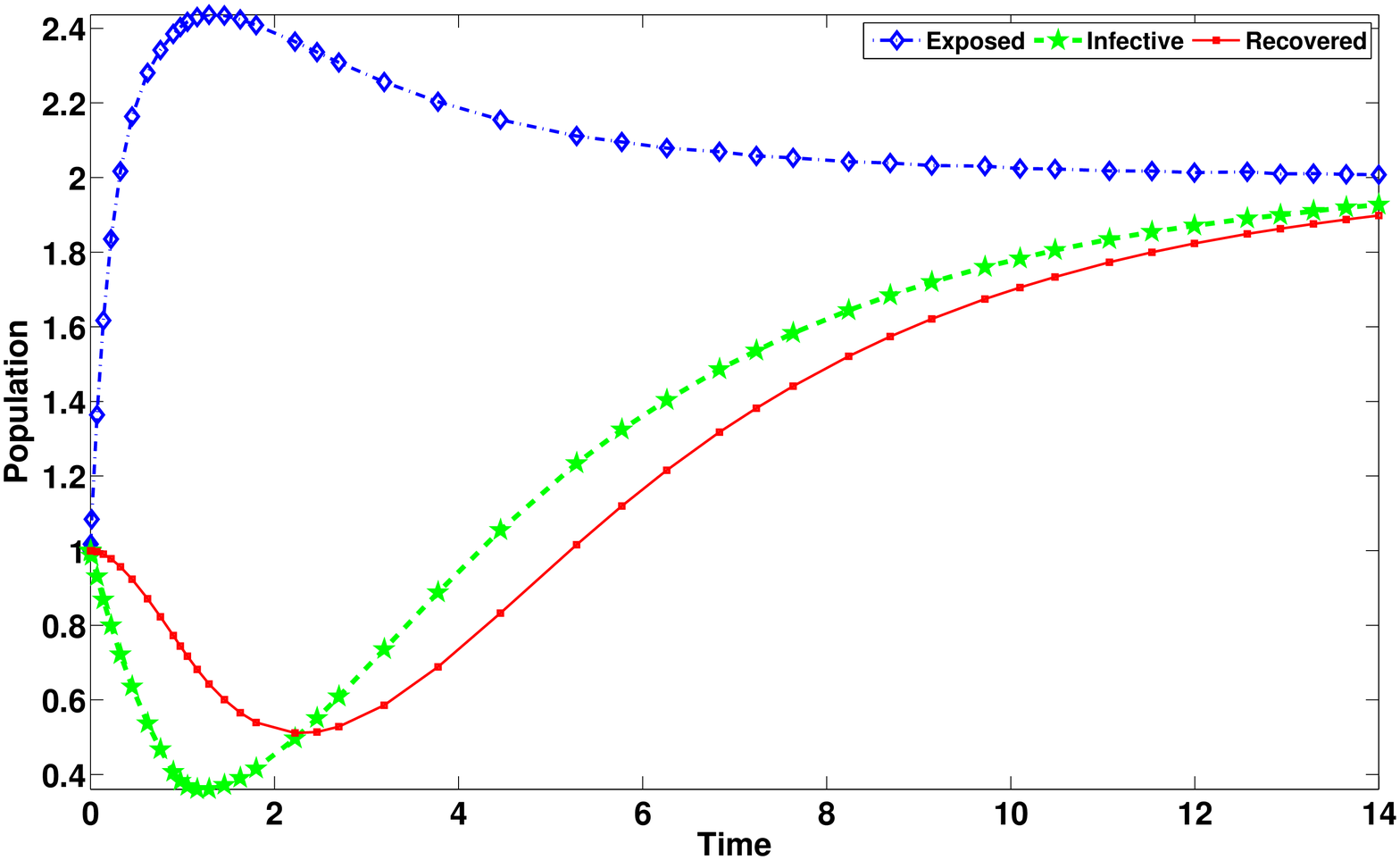}
                \caption{$\tau =0.9$}
                \label{fig6}
        \end{subfigure}
				\hfill
		\begin{subfigure}[b]{\textwidth}
                \includegraphics[height=50mm,width=\textwidth]{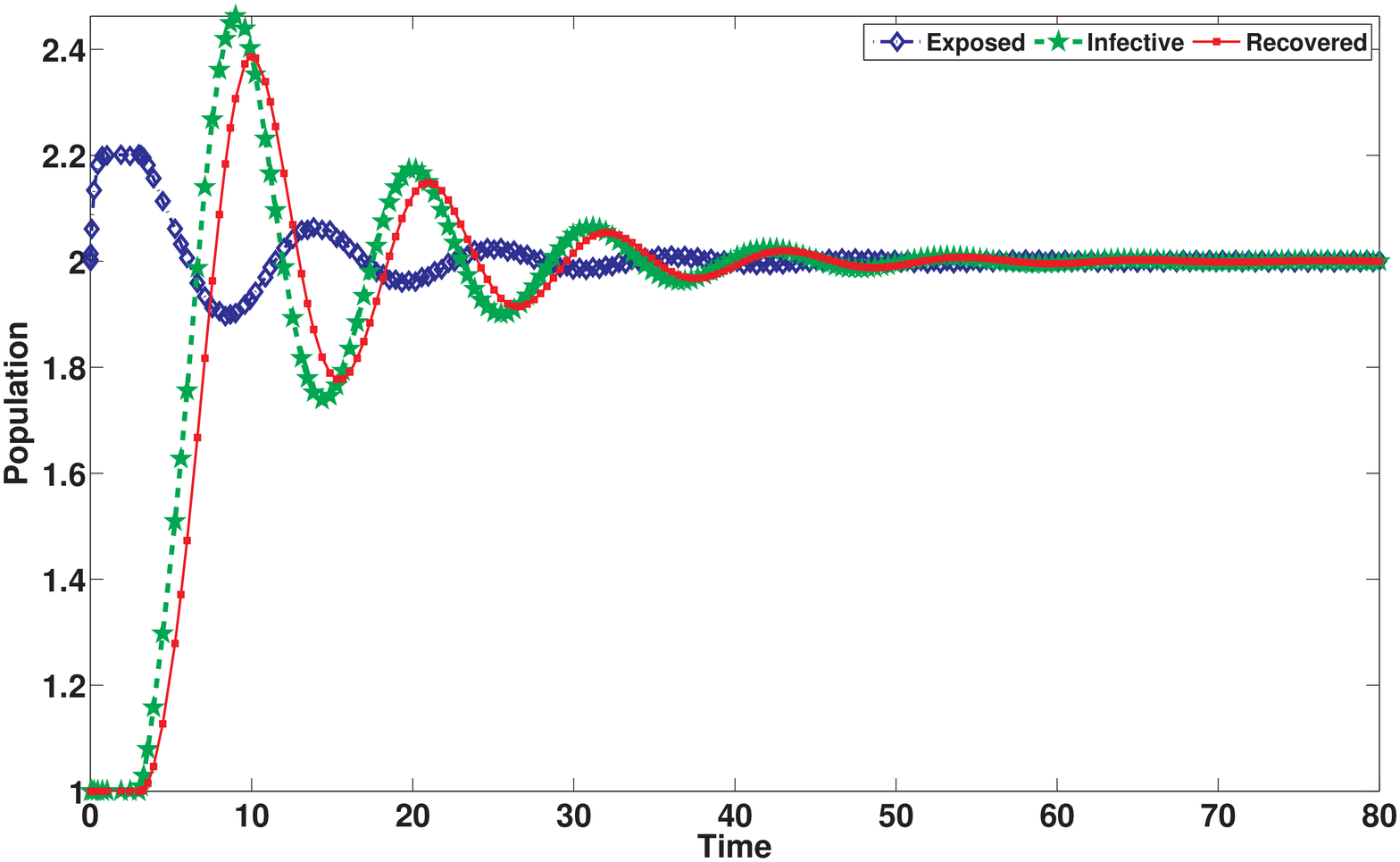}
                \caption{$\tau = 3$}
                \label{fig7}
        \end{subfigure}
  \caption{Solutions of (\ref{eqn53}) for various incubation delays; Endemic equilibrium (2,2,2) is stable for small delays under higher vaccination effort as compared to system (\ref{eqn51}).}
    \label{fig5to7}
	\end{figure}
	
For $\tau=4$ damped oscillations are observed as shown in Figure  \ref{fig8}, whereas for $\tau=5,6,7,8,9$, periodic solutions are observed in the simulations as may be seen from Figure \ref{fig9}-\ref{fig13} respectively. Thus, simulations support theoretical predictions.
\newpage
\setcounter{subfigure}{0}
\begin{figure}
	\begin{subfigure}[b]{0.5\textwidth}
                \includegraphics[height=50mm,width=\textwidth]{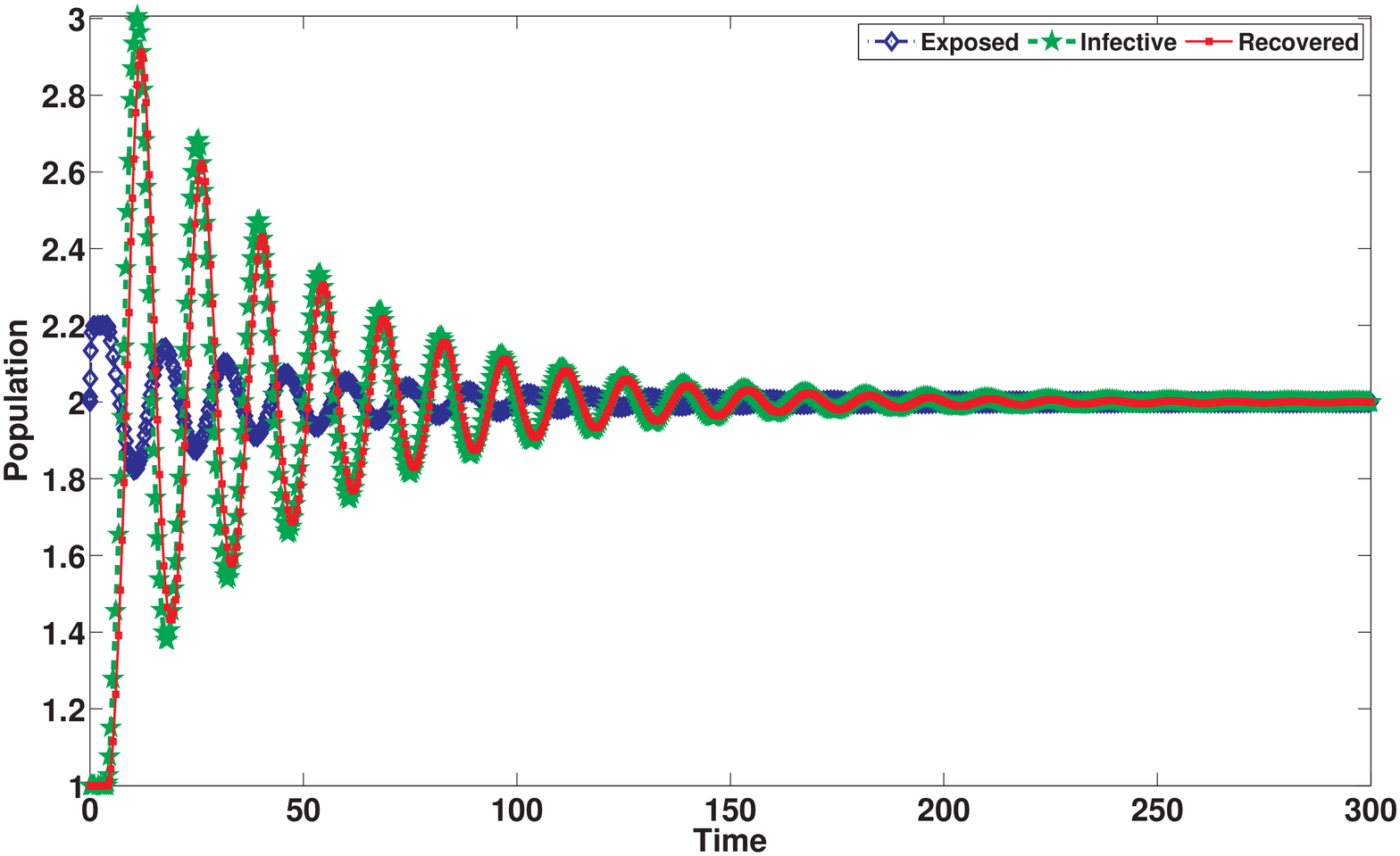}
                \caption{$\tau=4$}
                \label{fig8}
        \end{subfigure}%
        ~ 
        \begin{subfigure}[b]{0.5\textwidth}
                \includegraphics[height=50mm,width=\textwidth]{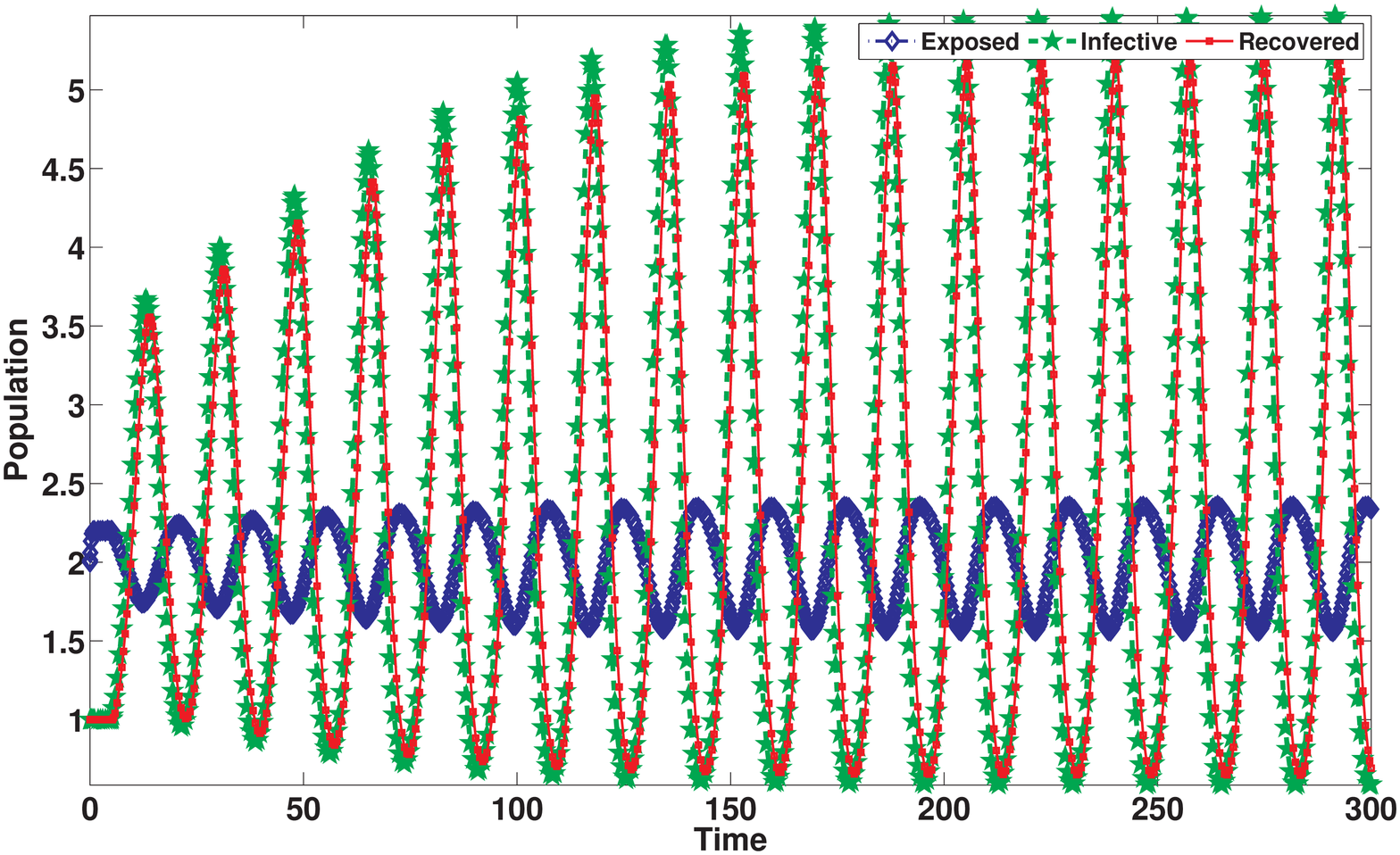}
                \caption{$\tau =5$}
                \label{fig9}
        \end{subfigure}
				\hfill
				\begin{subfigure}[b]{0.5\textwidth}
                \includegraphics[height=50mm,width=\textwidth]{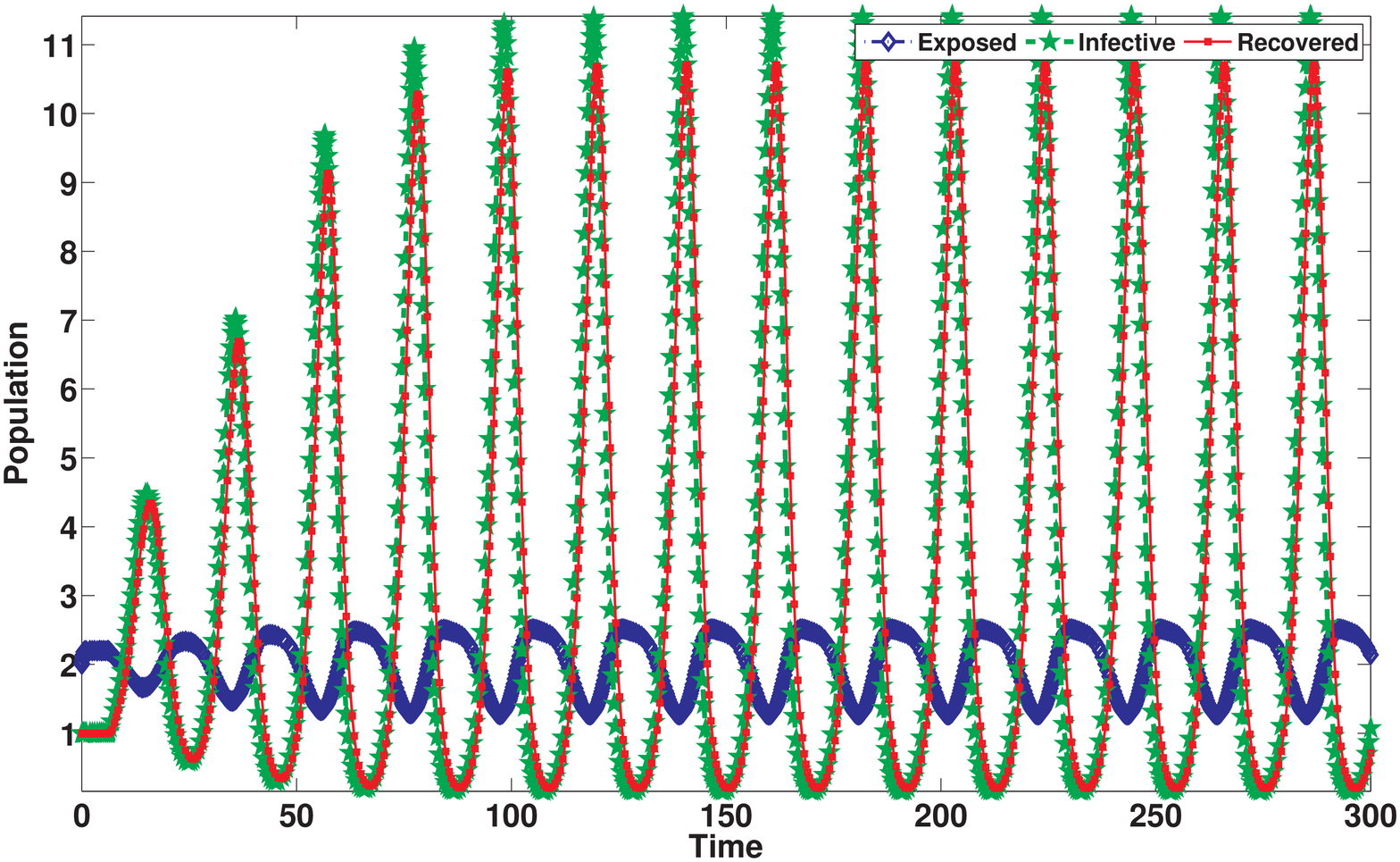}
                \caption{$\tau =6$}
                \label{fig10}
        \end{subfigure}
					\begin{subfigure}[b]{0.5\textwidth}
                \includegraphics[height=50mm,width=\textwidth]{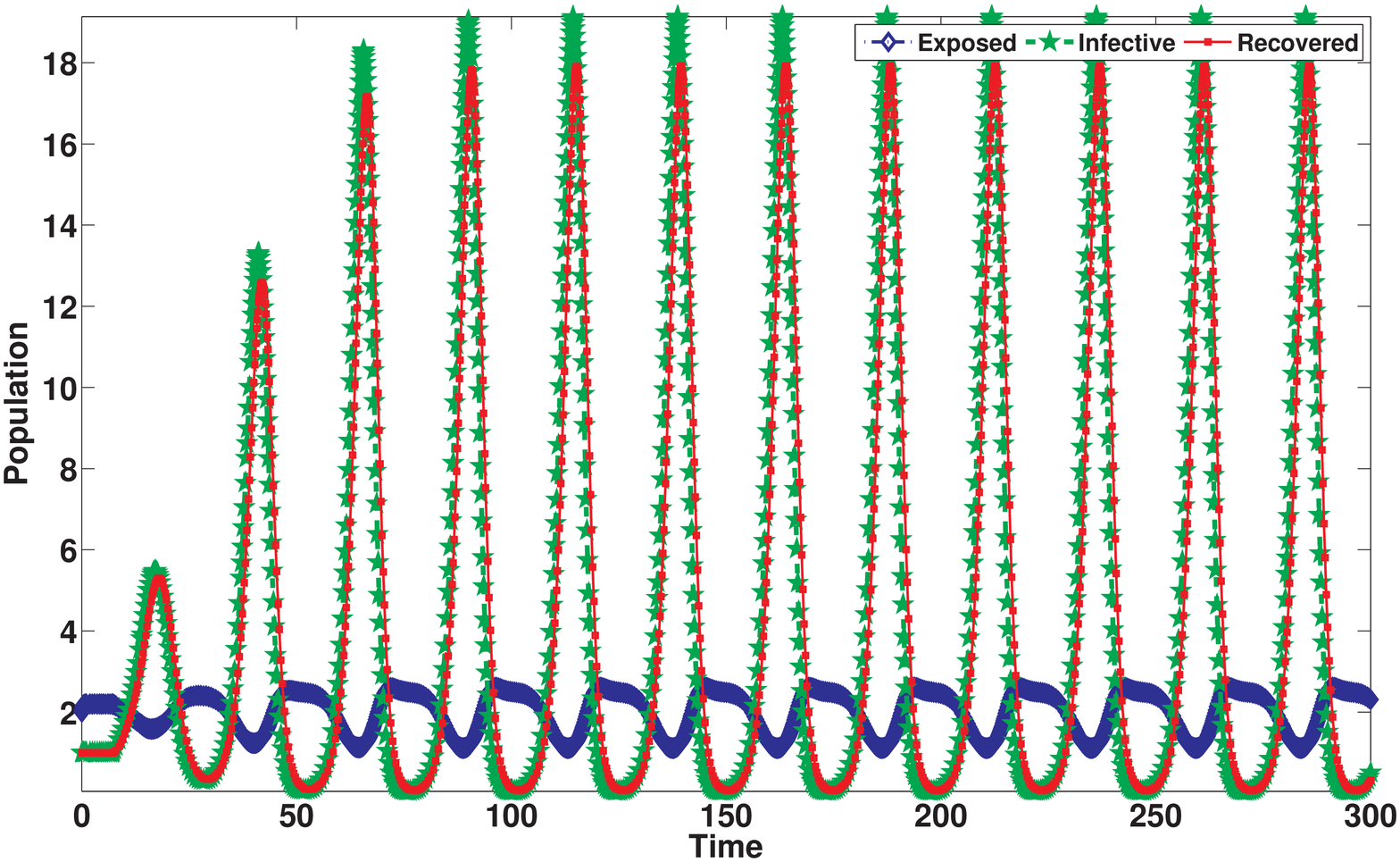}
                \caption{$\tau =7$}
                \label{fig11}
        \end{subfigure}
					\hfill
	\begin{subfigure}[b]{0.5\textwidth}
                \includegraphics[height=50mm,width=\textwidth]{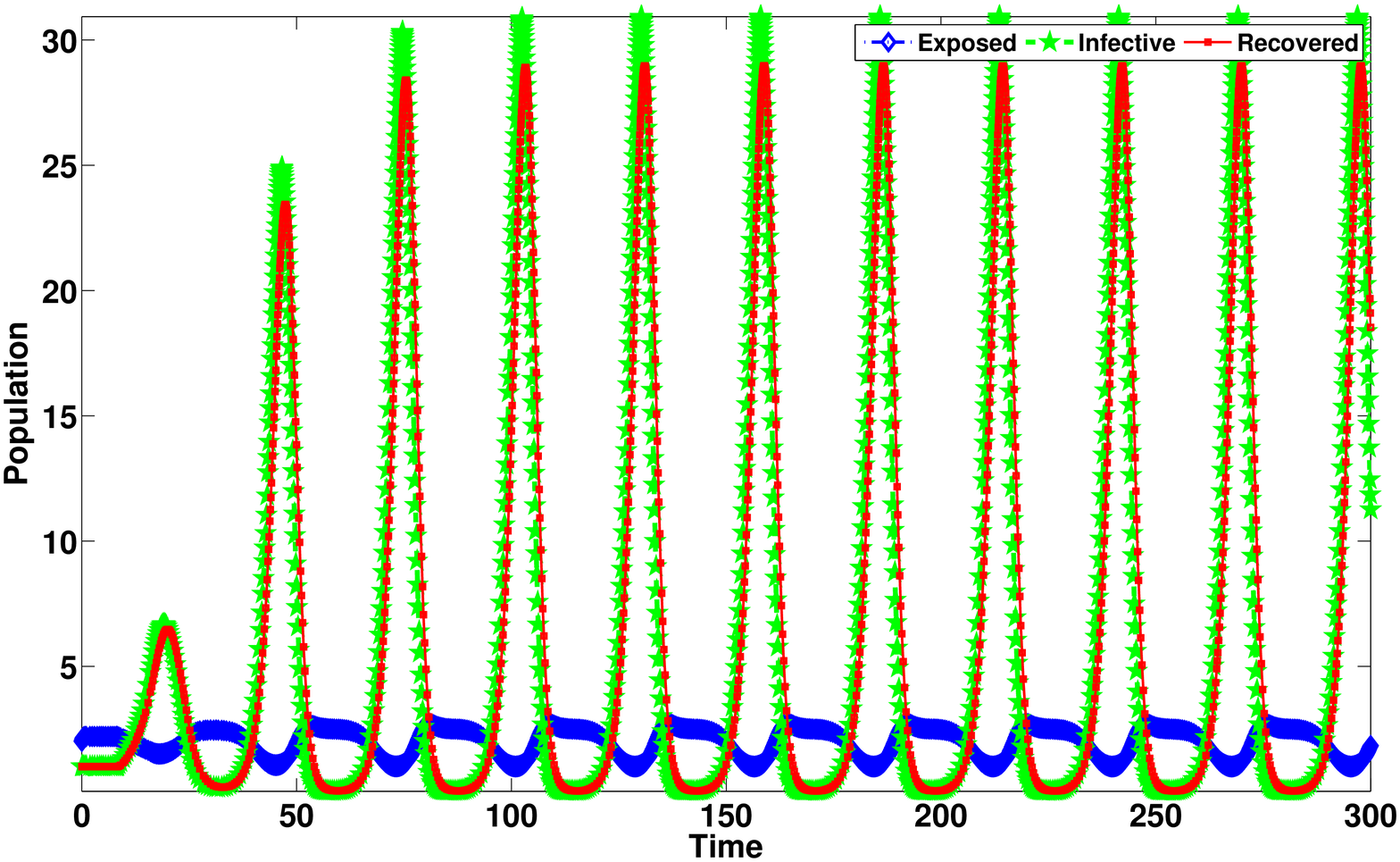}
                \caption{$\tau =8$}
                \label{fig12}
        \end{subfigure}
					\begin{subfigure}[b]{0.5\textwidth}
                \includegraphics[height=50mm,width=\textwidth]{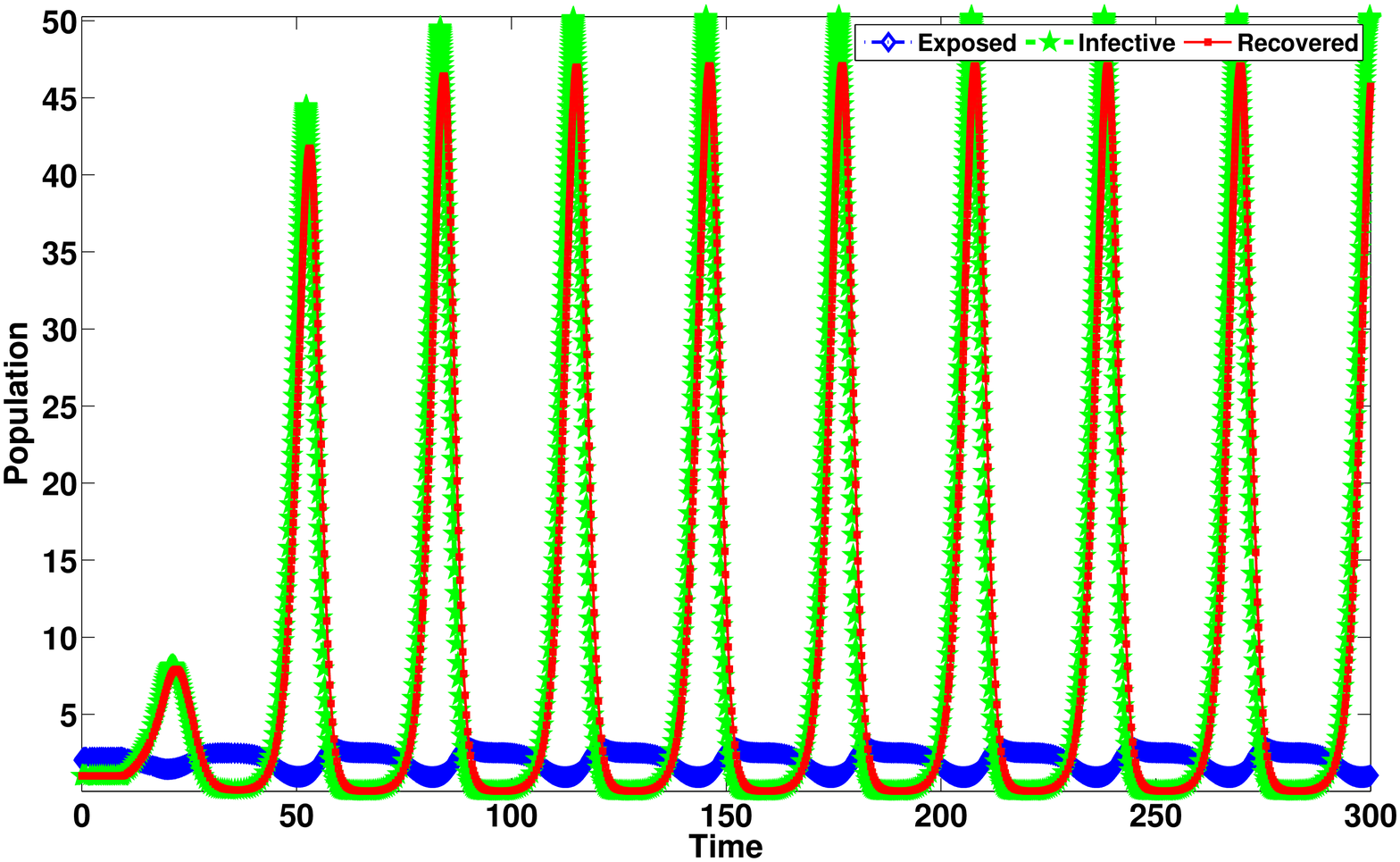}
                \caption{$\tau =9$}
                \label{fig13}
        \end{subfigure}				
  \caption{Solutions of (\ref{eqn53}) for various incubation delays; System (\ref{eqn53}) is behaving violently as delay$\tau$ goes on increasing beyond the estimated value of $\tau_{+} =4.3204$ showing that endemic equilibrium is unstable.}
    \label{fig8to13}
	\end{figure}

\newpage
\subsection{\textbf{Example 5.4.}}
\label{ex54}
Consider the system,
\begin{eqnarray}
\label{eqn54}
x' &=& 10-2xy-4x+z \nonumber \\
y' &=& 2x(t-\tau)y-5y \nonumber \\
z' &=& 4y(t-\delta)-z
\end{eqnarray}
We have chosen $r=4,\; c=3,\; b=2,\;b_1=2,\; d=d_1=1$ here. We shall study the influence of high recovery rate and vaccination rate together when the infection rate is doubled when compared to earlier examples. Clearly $x^{*}=\frac{aS}{c+d} = \frac{d_{1}+r}{b_{1}} = 2.5,$$y^{*}=\frac{a-(c+d)x^{*}}{bx^{*}-r} = 0,$ $z^* = 0.$ Thus, $(2.5,0,0)$ is an equilibrium solution of (\ref{eqn54}).

As in Example \ref{ex52}, the characteristic equation is $F(\lambda)=\lambda^{3}+3\lambda^{2}+4\lambda=0$ for which both the non zero roots certainly have negative real parts. Thus, the disease-free equilibrium is locally stable.

In the subsequent examples we choose non-linear functions for $f$,$V$ and/or $p.$ 

\subsection{\textbf{Example 5.5.}}
\label{ex55}
Consider the system,
\begin{eqnarray}
\label{eqn55}
x' &=& 10-2\frac{x}{x+y}-x+z \nonumber \\
y' &=& 2\frac{x(t-\tau)}{x(t-\tau)+y}-4y \nonumber \\
z' &=& 3y(t-\delta)-z
\end{eqnarray}
wherein, $r=3,\; c+d=1,\; b=2=b_1,\; d_1=1$ and $\alpha=1$ and non-linearity in infection only. Clearly this system has $(\frac{200}{21},\, \frac{10}{21},\,\frac{30}{21})$ as equilibrium which is locally stable by virtue of Theorem \ref{th35}. 

\subsection{\textbf{Example 5.6.}}
\label{ex56}
Consider the system,
\begin{eqnarray}
\label{eqn56}
x' &=& 10-3\frac{x}{x+2}y-x-x^2 +z \nonumber \\
y' &=& 3\frac{x(t-\tau)}{x(t-\tau)+2}y-2y \nonumber \\
z' &=& y(t-\delta)-z,
\end{eqnarray}
with non-linearities both in infection and vaccination. The parameters chosen are $r=3,\; c=1,\; d=1,\; b=3=b_1,\; d_1=1$ and $\alpha=1.$ Clearly this system has only a disease-free equilibrium given by $(\frac{\sqrt{41}-1}{2},\, 0,\,0)$ which is locally asymptotically stable for all delays $\tau$ and $\delta.$

\subsection{\textbf{Example 5.7.}}
\label{ex57}
We shall now introduce a non-linear recovery function in our model.
Consider the system,
\begin{eqnarray}
\label{eqn57}
x' &=& 1.5-3\frac{x}{x+2}y-x-\frac{x}{x+2} +z \nonumber \\
y' &=& 3\frac{x(t-\tau)}{x(t-\tau)+2}y-y-\frac{y}{1+y} \nonumber \\
z' &=& \frac{y(t-\delta)}{1+y(t-\delta)}-z,
\end{eqnarray}
wherein, $a=1.5$, $b=3=b_1,\; c=1,\; d=1=d_1=1$ $ r=1$ and $\alpha=1.$ Clearly this system has only a disease-free equilibrium given by $(\frac{\sqrt{57}-3}{4},\, 0,\,0)$ which is locally asymptotically stable for all delays $\tau$ and $\delta$ as may be seen in Figure \ref{fig14to17}.

\setcounter{subfigure}{0}
\begin{figure}
	\begin{subfigure}[b]{0.5\textwidth}
                \includegraphics[height=60mm,width=\textwidth]{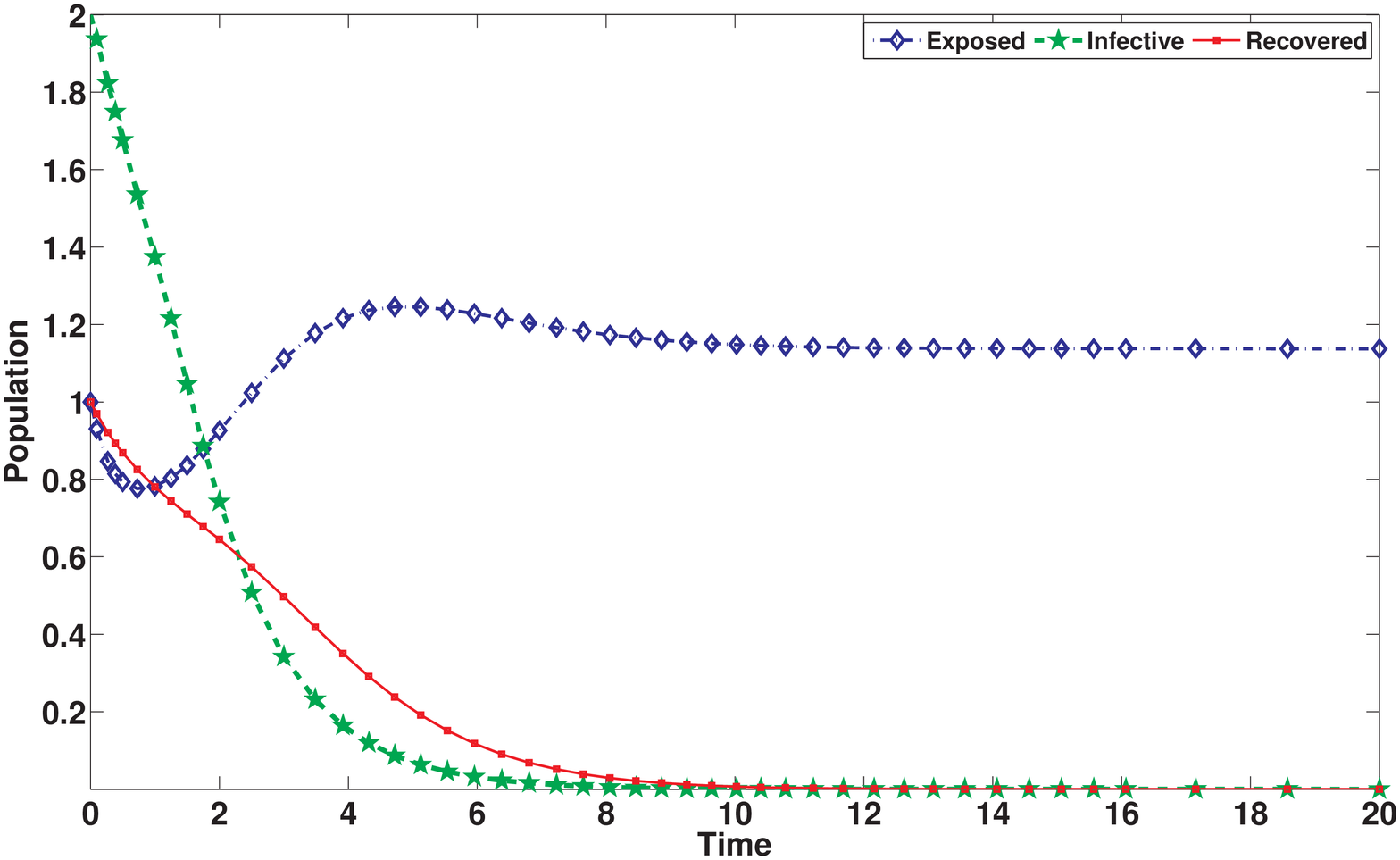}
                \caption{$\tau=1, \delta=0.5$}
                \label{fig14}
        \end{subfigure}%
        ~ 
        \begin{subfigure}[b]{0.5\textwidth}
                \includegraphics[height=60mm,width=\textwidth]{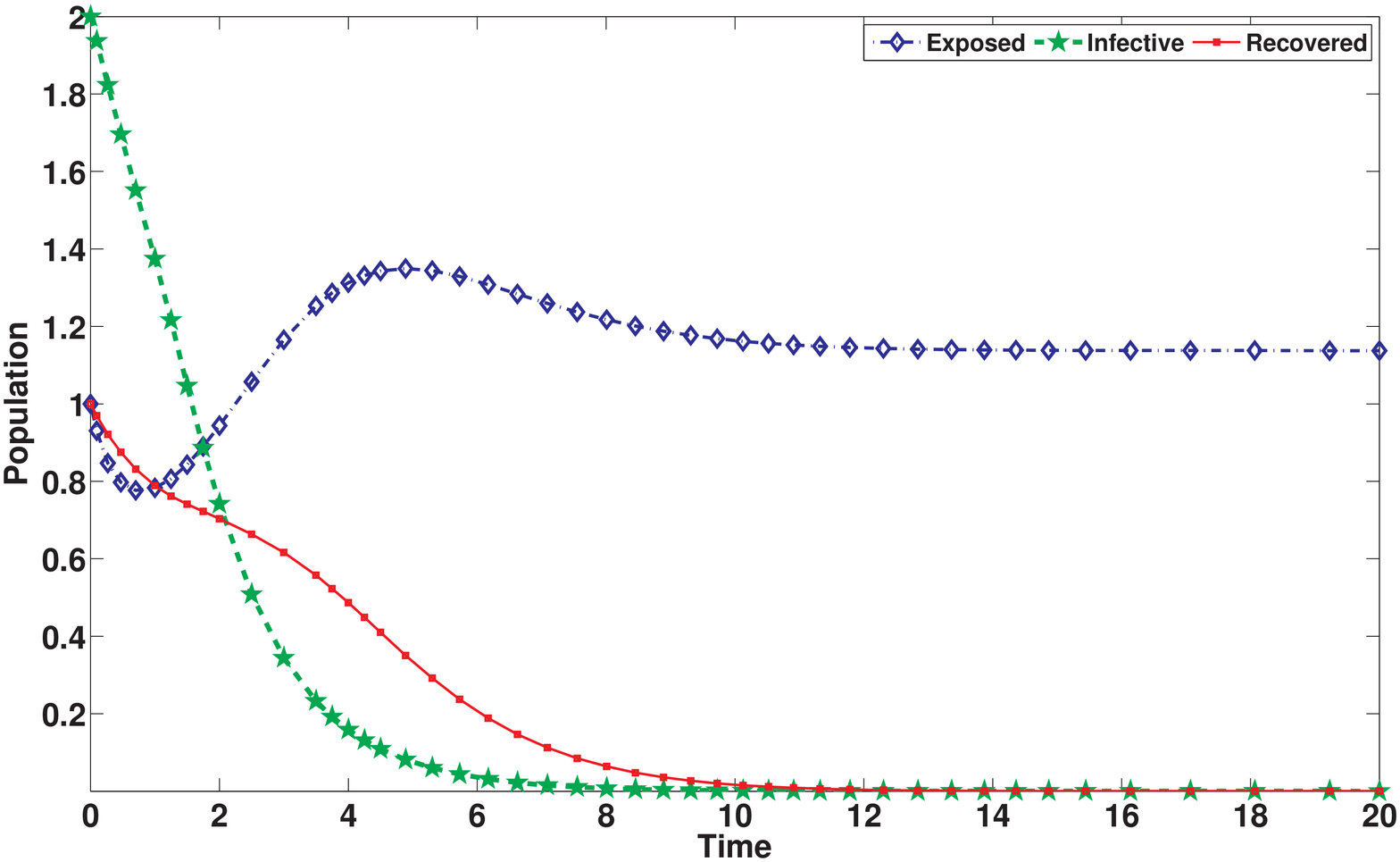}
                \caption{$\tau=1, \delta=1.5$}
                \label{fig15}
        \end{subfigure}
				\hfill
				\begin{subfigure}[b]{0.5\textwidth}
                \includegraphics[height=60mm,width=\textwidth]{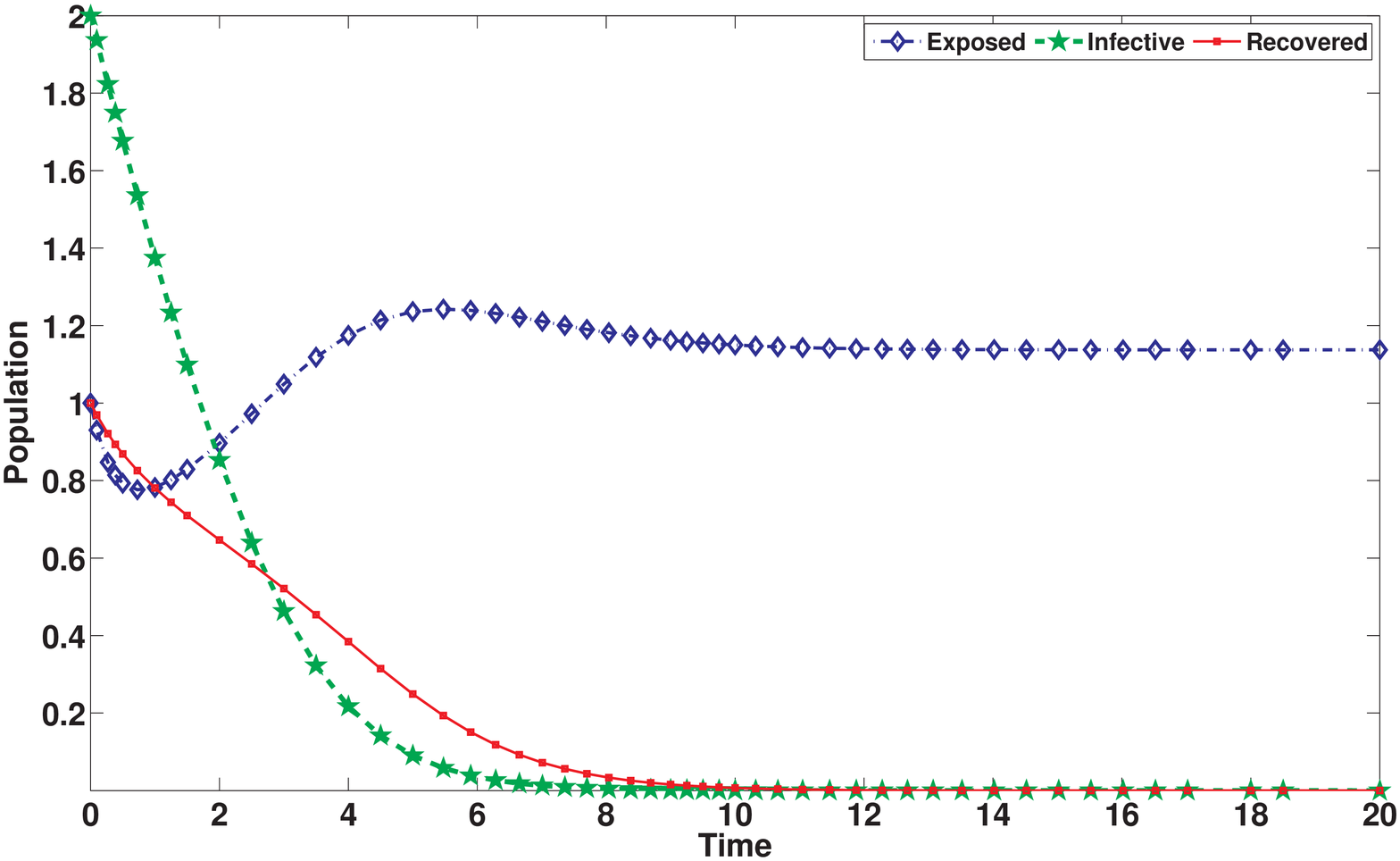}
                \caption{$\tau=9, \delta=0.5$}
                \label{fig16}
        \end{subfigure}
					\begin{subfigure}[b]{0.5\textwidth}
                \includegraphics[height=60mm,width=\textwidth]{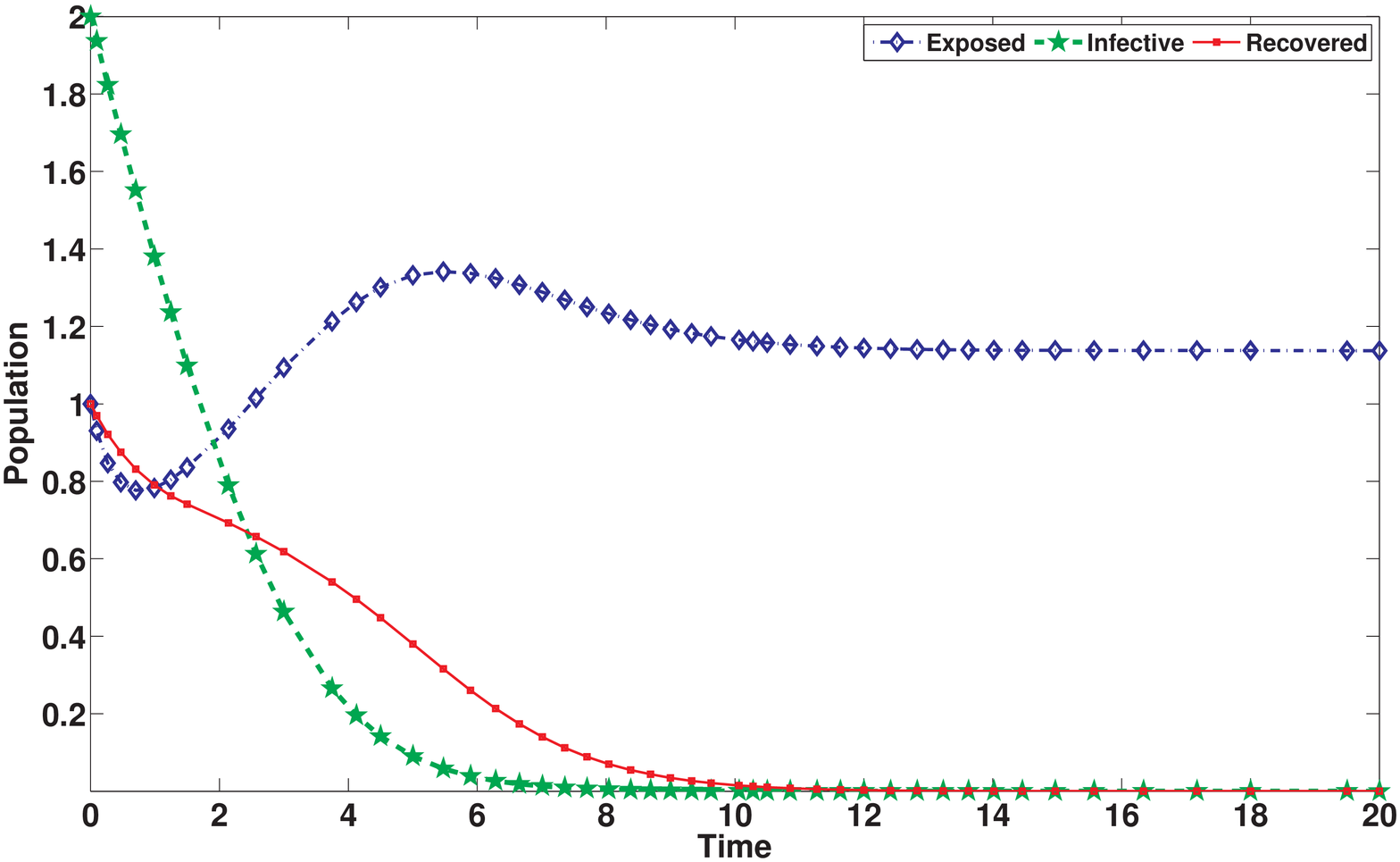}
                \caption{$\tau=9, \delta=1.5$}
                \label{fig17}
        \end{subfigure}
			
				\caption{Dynamics of the system (\ref{eqn57}) for various length of delays in both $\tau$ and $\delta$; Disease free equilibrium $(\frac{\sqrt{57}-3}{4},\, 0,\,0)$ remains stable under non-linearities in infection, vaccination and treatment.}
					\label{fig14to17}
\end{figure}

\subsection{\textbf{Observations}}

The System (\ref{eqn51}) in Example \ref{ex51} provides a situation where disease-free equilibrium $(5,0,0)$ and an endemic equilibrium $(2,6,6)$ do exist. However, conditions are conducive for both local and global stability of $(2,6,6).$ This indicates the prevalence of infection irrespective of a time delay and provide no scope for disease-free environment. \\

In Example \ref{ex52}, we have raised the treatment rate from $r=1$ in (\ref{eqn51}) to $r=4$ in (\ref{eqn52}). All other parameters and functional relations are kept as they are in (\ref{eqn51}). In this case only a disease-free equilibrium $(5,0,0)$ exists and there is no possibility of an endemic equilibrium. Theory says that $(5,0,0)$ is asymptotically stable both locally and globally creating an eventual infection-free environment. From this we notice that as more and more infected are treated, the possibility of infected contacting the susceptible comes down and spread of disease considerably reduced and more susceptible ($x^* =5$) remain in the system as compared to $x^*=2$ in the Example \ref{ex51}. \\

In order to understand the influence of vaccination effort on the system in Example 5.1, we raised the vaccination parameter from $c=1$ in system (\ref{eqn51}) to $c=3$ in system (\ref{eqn53}) (Example \ref{ex53}). All other parameters are kept as they are in Example \ref{ex51}. The system possesses two equilibria $(2.5,0,0)$ and $(2,2,2)$. When compared to the situation in Example \ref{ex51}, the equilibrium values of infected and recovered populations are considerably reduced from $y^*=6=z^*$ to $y^* =2=z^*,$ may be due to increased vaccination efforts. For small values of time delays, the disease equilibrium $(2,2,2)$ is stable but as delay in incubation increases the equilibrium becomes unstable as predicted by the theory. Wild fluctuations in populations are noticed for delay parameter $\tau >4$ which may be seen in Figures \ref{fig8}- \ref{fig13}. This indicates that time delay in incubation has an influence on the system even under enhanced vaccination. \\

In Example \ref{ex54} (system \ref{eqn54}), though the rate of conversion of susceptible into infected, that is $b_1$ has been doubled from $b_1 =1$ in (\ref{eqn51}) to $b_1=2$ in (\ref{eqn54}), we have stability of disease free equilibrium $(2.5,0,0)$ under the influence of higher vaccination and treatment $(c=3, r=4)$ in (\ref{eqn54}) compared to $c=1, r=1$ of (\ref{eqn51}).\\

The nonlinear function for infection chosen in Example \ref{ex55} (see system (\ref{eqn55})) provides no scope for the existence of a disease free equilibrium and infection prevails in all cases. Similarly, nonlinear functions of vaccination, infection and treatment are considered in Examples \ref{ex56} and \ref{ex57}. In both these cases, the corresponding disease free equilibrium is locally asymptotically stable independent of time delays.

\begin{remark}
For the systems in Examples \ref{ex52}, \ref{ex54}, \ref{ex56} and \ref{ex57}, we notice that only  a disease-free equilibrium exists. Further, $D=0,$ $C=0$ and $A\leq 0$ at this equilibrium and for the choice of functions made. As we have noticed in Remark \ref{th32}, the system is, thus, behaving like a delay-free system and the disease-free equilibrium is locally asymptotically stable, while Examples \ref{ex51}, \ref{ex53} and \ref{ex57} provide a disease environment. $\square$
\end{remark}
\section{Conclusions}
\label{sec6}
In this paper, we proposed a mathematical model describing the dynamics of a population consisting of susceptible, infected and recovered by treatment categories. Existence of equilibria and their local stability are studied in detail. An attempt is made to understand the influence of vaccination and treatment on the populations in the presence of time delays in infection, recovery. Results on global stability are also established for a special case of the proposed model. Situations where the effect of time delays on the stability of the equilibria is null and void are identified in this special case. It is important and interesting to further explore this view point for more general cases. In \cite{2} it is observed that the vaccination threshold rate is admissible, beyond which the infection gets eradicated. But Examples \ref{ex51} and \ref{ex53} illustrate that vaccination alone cannot eradicate infection but it is the treatment rate that is playing a key role in controlling the disease when the system is influenced by time delays.\\

The present study raises some basic questions such as,

\begin{enumerate}

\item In Example \ref{ex53}, it is noticed that the equilibrium $(2,2,2)$ is locally stable as long as the time delay in infection $\tau <4.$ Thereafter this equilibrium becomes unstable as predicted by the theory (see results in Sections \ref{sec3} and \ref{sec4}). At the same time, simulations show that for $\tau > 4$ the system is behaving very wild and large fluctuations are observed in infected and recovered populations. This situation is more vulnerable than having a stable endemic equilibrium where the situation may not be much beyond control. Thus, existence of a stable endemic equilibrium may sometimes be a better choice. Further, the behaviour of the system under the simultaneous influence of time delays (general case, Section \ref{subsec34}) needs to be investigated in detail. For commensurate delays, study in Section \ref{subsec34} may provide an estimate but for other types of delays, more explicit estimates are required. However, we need to analyze such situations further. \\

\item Most of the mathematical studies on biological models concentrate on the stability of equilibria (known or identifiable solutions) of the system. Consider the following system.
\begin{eqnarray}
x' &=& 10-3xy-2x+z \nonumber \\
y' &=& 3x(t-\tau)y-5y \nonumber \\
z' &=& 2y(t-\delta)-z.
\end{eqnarray}

This has both disease free $(5,0,0)$ and endemic $(\frac{5}{3}, \frac{20}{9}, \frac{20}{9})$ equilibria. But neither Theorem \ref{th42} nor Theorem \ref{th41} holds for this case. Also system (\ref{eqn55}) in Example \ref{ex55} allows no possibility for existence of a disease free equilibrium, and hence, no scope for disease free environment. Comments made in Section \ref{subsec21} are aimed at such situations. In such cases, we have to study the ways of controlling the infected populations rather than the stability of the system. This raises the following questions. How to control $y$? Can we find a small $\epsilon \geq 0$ such that $y(t)\leq \epsilon$ for some $t\geq T.$ Can we create an inactive state (a 'zone of no activation') for the spreading of the disease? That means can we identify and keep the infected people in a quarantine (at least till vaccination is effective or treatment available is sufficient to meet the requirements)? This would be the point of contention in our future exposition. \\

\end{enumerate}

It is observed in \cite{35} that for diseases like Chagas, the effort of vaccination is not adequate to control the disease and treatment is recommended in both acute and chronic stages of diseases. System (\ref{eqn21}) through Examples \ref{ex51} to \ref{ex57} illustrates such cases. Our examples conclude that when the treatment rate is high, a disease-free environment is created (stability of $(2,6,6)$ - Example \ref{ex51} to stability of $(5,0,0)$ - Example \ref{ex52}). In the presence of improved vaccination effort alone, the infected and hence, recovered populations are lowered (stability of $(2,6,6)$ - Example \ref{ex51} to stability of $(2,2,2)$ - Example \ref{ex53}). Under the influence of better vaccination in addition to high treatment rate, we notice a considerable decrease in susceptible population due to increased rate of infection (stability of $(5,0,0)$ - Example \ref{ex52} to stability of $(2.5,0,0)$ - Example \ref{ex54}). It would be interesting to see how far our model explains the dynamics of Chagas disease. \\

On the whole, our common expectations on the model really come true and this prompts us to analyze this model further with regard to its applicability for understanding various infectious diseases - this would be our scope of future study. We  need to present simpler conditions on parameters and provide results for global asymptotic stability for general infection functions $f(x,y).$ \\

Also, global stability analysis of the model with general interaction, vaccination and recovery functions is in the offing. Infection rate, treatment rate and vaccination are the three important parameters that  essentially decide the dynamics of the spread of a disease. An effort has to be made for estimation of these parameters in order to decide the measures of control of disease. Our investigation on these lines will be made open soon. \\

\section{Discussion and Open Research Problems}
\label{sec7}
We shall now propose some modifications to (\ref{eqn1}) taking into consideration some realistic phenomena. We present the following cases as open research problems that would form a part of our further research,

\begin{enumerate}

\item We have assumed the growth rate of susceptible population $a$ in (\ref{eqn1}) as a fixed constant. This assumption holds good when the susceptible population receives input from a large society and the disease visits for a limited period during which the reproduction of susceptible, infected and/or recovered population is not significant. However, in case of chronic diseases such as AIDS, Chagas e.t.c., offspring of all three populations may make sizeable contribution to the system. Thus, we may consider,

\begin{eqnarray}
x' &=& aU(x,y,z) -b f(x,y)- d x - cV(x)+\alpha z \nonumber\\
y' &=& b_1 f\big(x(t-\tau), y(t)\big)- r P(y) -d_1 y \nonumber\\
z' &=& r P\big(y(t-\delta)\big) -\alpha z,
\end{eqnarray}

here $aU(x,y,z)$ denotes the growth function of susceptible population.

\item Vaccination well before an attack by the disease is rare. May be it is possible in a well prepared society ? But in most of the cases, vaccine is not readily available and a delay in its preparation or supply is natural. This may be viewed in two directions.
\begin{enumerate}
\item Vaccine takes time $\gamma >0$ to develop immunity so that susceptible population vaccinated at time $t-\gamma$ are immune at time $t.$ Hence, such population may be removed from the system. Thus, we have 

\begin{eqnarray}
x' &=& a -b f(x,y)- d x - cV(t-\gamma)+\alpha z \nonumber\\
y' &=& b_1 f\big(x(t-\tau), y(t)\big)- r P(y) -d_1 y \nonumber\\
z' &=& r P\big(y(t-\delta)\big) -\alpha z.
\end{eqnarray}
Notice that we have taken $V(x(t)) \equiv V(t-\gamma)$ in (\ref{eqn1}) for this case.

\item Vaccination effort being carried out at time $t$ depends on a previous estimation of population. That is, we choose $V(x(t))= V(x(t-\eta))$ in (\ref{eqn1}). We have 

\begin{eqnarray}
x' &=& a -b f(x,y)- d x - cV(x(t-\eta))+\alpha z \nonumber\\
y' &=& b_1 f\big(x(t-\tau), y(t)\big)- r P(y) -d_1 y \nonumber\\
z' &=& r P\big(y(t-\delta)\big) -\alpha z.
\end{eqnarray}
\end{enumerate}

\item Recovered population may take time to become susceptible again. That means, they could retain health for some time, say, $\mu >0.$ This may be represented in (\ref{eqn1}) as,
 
\begin{eqnarray}
x' &=& a -b f(x,y)- d x - cV(x)+\alpha z(t-\mu) \nonumber\\
y' &=& b_1 f\big(x(t-\tau), y(t)\big)- r P(y) -d_1 y \nonumber\\
z' &=& r P\big(y(t-\delta)\big) -\alpha z.
\end{eqnarray}

\item Though mathematical models can describe many plausible realistic phenomena, their usefulness is acceptable only if they stand the test on the real time data. Thus, it would be interesting to test the results of this article with available real time data so that necessary modifications or improvement of (\ref{eqn1}) may be taken up suitably.

\item Consider the influence of sudden environmental changes or other measures taken up to control the spread of disease (may be described as noise in the system). Introducing such term in (\ref{eqn1}), we have

\begin{eqnarray}
x' &=& a -b f(x,y)- d x - cV(x)+\alpha z \nonumber\\
y' &=& b_1 f\big(x(t-\tau), y(t)\big)- r P(y) -d_1 y -\beta(t,y)\nonumber\\
z' &=& r P\big(y(t-\delta)\big) -\alpha z,
\end{eqnarray}
here $\beta(t,y) \geq 0$ for all $t,\; y$ may be understood as the rate at which infected population is decreasing owing to changes in environment that are non-conducive for infection or measures taken up to keep the infected away from the system (isolation) or measures taken up by society for controlling the spread of the disease. Of course, a $\beta(t,y)<0$ condition says that the situation is encouraging for growth of infected population.   
\end{enumerate}

\section*{Acknowledgements}
The authors wish to thank the anonymous reviewers for their useful and constructive suggestions.


\section*{References}
\bibliographystyle{elsarticle-num}

\end{document}